\title[]{An exponential inequality for $U$-statistics of i.i.d. data}
\author{Davide Giraudo}
\keywords{  U-statistics, exponential inequality  }
\date{\today}
\numberwithin{equation}{section}
\renewcommand{\leq}{\leqslant}
\renewcommand{\geq}{\geqslant}
\newtheorem{Theorem}{Theorem}[section]
\newtheorem{Th\'eor\`eme}{Th\'eor\`eme}[section]
\newtheorem{Proposition}[Theorem]{Proposition}
\newtheorem{Lemma}[Theorem]{Lemma}
\newtheorem{Definition}[Theorem]{Definition}
\newtheorem{D\'efinition}[Th\'eor\`eme]{D\'efinition}
\newtheorem{Corollary}[Theorem]{Corollary}
\theoremstyle{remark}
\newtheorem{Remark}[Theorem]{Remark}
\tikzstyle{Vertex}=[circle,draw=LimeGreen!80,fill=LimeGreen!8,
\tikzstyle{Node}=[Vertex,draw=RoyalBlue!80,fill=RoyalBlue!8,inner sep=1.5pt]
\tikzstyle{Leaf}=[rectangle,draw=Black!70,fill=Black!16,
\tikzstyle{Edge}=[Maroon!80,cap=round,line width=1pt]
\tikzstyle{Mark1}=[draw=BrickRed!80,fill=BrickRed!8]
\tikzstyle{Mark2}=[draw=BurntOrange!80,fill=BurntOrange!8]
\tikzstyle{EdgeRew}=[->,RedOrange!80,cap=round,thick]
\newcommand{\Fca}{\mathcal{F}}
\newcommand{\Gca}{\mathcal{G}}
\newcommand{\Hca}{\mathcal{H}}
\newcommand{\Rca}{\mathcal{R}}
\newcommand{\Sca}{\mathcal{S}}
\newcommand \ens[1]{\left\{ #1\right\}}
\newcommand \R{\mathbb R}
\newcommand \PP{\mathbb P}
\newcommand{\E}[1]{\mathbb E\left[#1\right]}
\newcommand \Z{\mathbb Z}
\newcommand \abs[1]{\left|#1\right|}
\newcommand \eps{\varepsilon}
\newcommand{\pr}[1]{\left(#1\right)}
\newcommand{\norm}[1]{\left\lVert #1 \right\rVert}
\newcommand{\conv}{\leq_{\mathrm{conv}}}
\newcommand{\som}[2]{\sum_{#1}^{#2}}
\begin{document}


\begin{abstract}
 We establish an exponential inequality for degenerated $U$-statistics of order $r$ of i.i.d. data. This 
 inequality gives a control of the tail of the maxima  obsulute values 
 of the $U$-statistic by the sum of two terms: an exponential term and one involving the tail 
 of $h\pr{X_1,\dots,X_r}$. We also give a version for not necessarily degenerated $U$-statistics 
 having a symmetric kernel and furnish an application to the convergence rates 
 in the Marcinkiewicz law of large numbers. Application to invariance principle in Hölder 
 spaces is also considered.
\end{abstract}
\maketitle

 \section{Exponential inequalities for $U$-statistics}
 
 \subsection{Goal of the paper}
  Let $\pr{\Omega,\Fca,\PP}$ be a probability space,
  $\pr{S,\Sca}$ be a measurable space and let $r\geq 1$ be an integer. 
 Let also $h\colon S^r\to \R$ be a measurable function and $\pr{X_i}_{i\geq 1}$
 an  i.i.d. sequence where $X_i\colon \Omega\to S$. 
 The $U$-statistic (of order $r$)
 of kernel $h$ and data $\pr{X_i}_{i\geq 1}$ is defined as 
 \begin{equation}\label{eq:definition_U_stat}
 U_{r,n}\pr{h}:=\sum_{i\in I_n^r}h\pr{X_{i_1},\dots, X_{i_r}},
 \end{equation}
 where 
 \begin{equation}\label{eq:definition_de_I_n_r}
  I_n^r:=\ens{i\in\Z^r, 1\leq i_1<i_2<\dots<i_r\leq n   }.
 \end{equation}
When there is no confusion with the kernel or the order, we 
will simply write $U_n$.
 
Our goal is to control the tail of $U_n$. More precisely, we would 
like to give a control on the following quantity 
\begin{equation}
\PP\ens{\max_{r\leq n\leq N}\abs{U_n}>N^{r/2}x}, \quad N\geq r, x>0,
\end{equation}
independently on $N$, and with the help of a functional of the tail of 
$h\pr{X_1,\dots,X_r}$. 
This type of inequalities has been studied in the case where the 
kernel $h$ is bounded in
 \cite{MR0144363,MR1235426,MR1323145,MR1130366,MR2791056,MR2336595}. 
An extension to the case of $U$-statisitics of the form $\sum_{i\in I_n^r}h_i\pr{X_{i_1},
\dots,X_{i_r}}$ has been considered in \cite{MR1857312,MR2073426}.
We will then provide applications to the convergence rates in the law of large numbers. Moreover, 
the established inequality is also a good tool to verify tightness criterion for partial sum 
processes in Hölder spaces.

 \subsection{Statement for degenerated $U$-statistics}

We will first assume that this $U$-statistic is degenerated, in the sense that 
for all $s_1,\dots,s_{r-1}\in S$ and for all $q\in \ens{1,\dots,r}$,  
\begin{equation}\label{eq:condition_de_degeneration}
\E{h\pr{v_q\pr{s_1,\dots,s_{r-1},X_0}}}=0,
\end{equation} 
where $v_q\pr{s_1,\dots,s_{r-1},X_0}$ is a vector of elements of 
$S^r$ whose $q-1$ first components are respectively $s_1,\dots,s_{q-1}$, 
the component $q$ is $X_0$ and the remaining ones are 
$s_{q+1},\dots,s_{r-1}$. Notice that here, we are not making any symmetry 
assumption on the kernel $h$.

\begin{Theorem}\label{thm:inegalite_deviation_U_stats}
Let  $r\geq 1$ be an integer, $\pr{S,\Sca}$ be measurable space, $h\colon 
S^r\to \R$ be a measurable function (with $S^r$ induced with the product 
$\sigma$-algebra) and let $\pr{X_i}_{i\geq 1}$ be an i.i.d. sequence of 
$S$-valued random variables. Suppose that \eqref{eq:condition_de_degeneration} 
holds for all $s_1,\dots,s_{r-1}\in S$ and for all $q\in \ens{1,\dots,r}$. Then 
the following inequality holds for all positive $x$ and all $y$ such that 
$x/y>3^r$:
\begin{multline}\label{eq:inegalite_exp_cas_degenere}
\PP\ens{\max_{r\leq n\leq N}\abs{U_n}>N^{r/2}x} 
\leq A_{r}\exp\pr{-  \frac 12\pr{\frac{x}y}^{ \frac{2}{r    }}}
\\+B_{r}\int_{1 }^{+\infty}
\PP\ens{\abs{h\pr{X_1,\dots,X_r}}> y u ^{1/2} C_{r}   } 
\pr{1+\ln\pr{ u}}^{q_{r}}\mathrm du,
\end{multline}
 where 

\begin{equation}
A_{r}:=4\pr{1-2^{-r}}
\end{equation}
\begin{equation}
B_{r}:=2\prod_{i=1}^{r}\frac{2^{i-1 }}{\kappa_{i-1   }}
\pr{1+\ln\frac{\kappa_{2 \pr{i-1}    }}{\kappa_{  i-1}  }}^{i-1}
\end{equation}
where $\kappa_q$ is defined for a non-negative $q$ by 
\begin{equation}
 \kappa_q=\begin{cases}
           1&\mbox{ if }q\leq 1\\
           e^{q-1}/q^q&\mbox{ if }q> 1;
          \end{cases}
\end{equation}
\begin{equation}
C_{r}:=4^{-r/2}\prod_{i=1}^r\kappa_{ i-1  }^{1/2}
\end{equation}
and 
\begin{equation}
q_{r}:=\frac{\pr{r-1}r}{2}.
\end{equation}
\end{Theorem}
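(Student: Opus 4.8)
The proof will go by induction on the order $r$, the engine at each level being a truncation of the kernel combined with a maximal inequality, so I will first record two preliminary facts and then describe the two inductive regimes. The first fact is that the degeneracy condition \eqref{eq:condition_de_degeneration}, used in the last coordinate, makes $(U_n)_{n\geq r}$ a martingale for the filtration $\Fca_n:=\sigma(X_1,\dots,X_n)$: since $U_n-U_{n-1}=\sum_{i\in I_{n-1}^{r-1}}h(X_{i_1},\dots,X_{i_{r-1}},X_n)$, each summand has zero conditional expectation given $\Fca_{n-1}$. Hence, applying Doob's inequality to the submartingale $\exp(\theta U_n)$ (and likewise to $\exp(-\theta U_n)$), one may replace the maximum over $r\leq n\leq N$ by, up to a factor $2$ and a mild loss in the threshold, the single one-sided bound at $n=N$; the factor $2$ here together with the geometrically decaying error terms produced at the $r$ levels of the recursion — one exponential term per level, with coefficient $\sim 2^{1-i}$ at level $i$ — will sum to $A_r=4(1-2^{-r})$. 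The second fact, which I would quote from the bounded-kernel literature cited in the introduction, is a Bernstein/sub-Weibull-type estimate for degenerate $U$-statistics with a \emph{bounded} kernel: for such a kernel of sup-norm $M$ one has $\E{\exp\bigl((|U_N|/(c_r N^{r/2}M))^{2/r}\bigr)}\leq c_r'$, equivalently a tail of the form $\exp(-\tfrac12(t)^{2/r})$ after normalisation; the sharp constant in this Orlicz bound is exactly what the numbers $\kappa_q$ encode.

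\emph{Base case $r=1$.} Here $U_n=\sum_{i=1}^n h(X_i)$ is a sum of i.i.d. centred variables, and since no moment assumption is available a single truncation is not enough: I would split $h$ into the bounded layer $h\mathbf 1\{|h|\leq y\}$ and the dyadic layers $h\mathbf 1\{2^{k-1}y<|h|\leq 2^{k}y\}$, $k\geq 1$, re-centring each layer so that it remains centred. The bounded layer, handled by the Bernstein inequality together with the maximal inequality, produces the term $A_1\exp(-\tfrac12(x/y)^2)$; each dyadic layer is controlled by a crude Bernstein bound whose exponent scales like $2^{2k}$, and the sum over $k$ of these bounds is dominated by $B_1\int_1^{+\infty}\PP\{|h(X_1)|>yu^{1/2}C_1\}\,\mathrm du$ — converting the sum over scales into an integral is precisely what removes the dependence on $N$. (There is no $(1+\ln u)$ factor at this level, consistent with $q_1=0$.)

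\emph{Inductive step $r-1\Rightarrow r$.} Writing $U_n=\sum_{k=r}^n V_k$ with $V_k=\sum_{i\in I_{k-1}^{r-1}}h(X_{i_1},\dots,X_{i_{r-1}},X_k)$, I would condition on $X_k$: for a frozen last coordinate, $V_k$ is, in $X_1,\dots,X_{k-1}$, a $U$-statistic of order $r-1$ with the random kernel $h(\cdot,X_k)$, which by \eqref{eq:condition_de_degeneration} (coordinates $1,\dots,r-1$) is degenerate and (coordinate $r$) has zero $X_k$-mean. Truncating $h$ at a level proportional to $y u^{1/2}$, with the appropriate re-centring corrections so that degeneracy is preserved in \emph{every} coordinate, splits $U_n$ into a part built from a bounded kernel and a remainder built from the tails. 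To the bounded part I would apply the order-$(r-1)$ inequality conditionally on $X_k$, then average over $X_k$; optimising the truncation level against the resulting sub-Weibull estimate from the preliminary fact is where the constant $\kappa_{i-1}$, the factor $2^{i-1}$, and one extra power $(1+\ln u)^{i-1}$ emerge, so that after $r$ steps the accumulated constants are exactly $B_r$, $C_r=4^{-r/2}\prod_{i=1}^r\kappa_{i-1}^{1/2}$ and $q_r=\sum_{i=1}^r(i-1)=r(r-1)/2$. The remainder (tail) part is handled by a union bound over the at most $N$ blocks together with the same sum-to-integral passage, so that the $N$-dependence is once more absorbed into the $\mathrm du$-integral.

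\emph{Main obstacle.} The delicate part is the inductive step, for two reasons. First, the naïve truncation $h\mapsto h\mathbf 1\{|h|\leq M\}$ destroys \eqref{eq:condition_de_degeneration}: one has to subtract the correct Hoeffding-type projections in each of the $r$ coordinates and check that these corrections are harmless (they are $U$-statistics of strictly lower order whose kernels are supported on the tail event, hence absorbed into the integral term). Second — the real bookkeeping burden — one must follow the constants through the recursion so that they telescope \emph{exactly} into the stated $A_r,B_r,C_r,q_r$, and verify that the admissibility constraint $x/y>3^r$ propagates: at each level a factor $3$ is consumed when the kernel is split into ``bounded part $+$ tail part $+$ centring correction'' and the maximal inequality is reapplied, which after $r$ levels gives $3^r$. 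Everything else — Bernstein for bounded degenerate $U$-statistics and Doob's exponential maximal inequality — is quoted rather than proved.
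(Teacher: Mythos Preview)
Your scheme differs substantially from the paper's and, as written, has real gaps. The paper does not truncate at all and does not quote a bounded-kernel Bernstein inequality as a black box. Instead, writing $N^{-r/2}U_n=N^{-1/2}\sum_{j\leq n}D_j$ with $D_j=N^{-(r-1)/2}\sum_{i\in I_{j-1}^{r-1}}h(X_{i_1},\dots,X_{i_{r-1}},X_j)$, it applies a Fan--Grama--Liu martingale inequality that produces $2e^{-\frac12(x/y)^2}$ plus the tail of $\sum_j D_j^2+\sum_j\E{D_j^2\mid\Fca_{j-1}}$, and controls the latter via \emph{convex ordering}: one shows $D_j^2\leq_{\mathrm{conv}}Y^2$ where $Y=N^{-(r-1)/2}\max_{k\leq N}\abs{U'_k}$ and $U'_k$ is an order-$(r-1)$ $U$-statistic with an independent copy of $X_1$ frozen in the last slot. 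A short lemma turns $\leq_{\mathrm{conv}}$ into a tail bound of the form $\int_1^\infty\PP\{Y>y(u/4)^{1/2}\}\,du$; conditioning on the frozen variable and invoking the induction hypothesis on this order-$(r-1)$ maximum yields a double integral, which an elementary rearrangement lemma (Lemma~\ref{lem:integrale_double_avec_log}) collapses to a single integral with one extra logarithmic power. The constants $\kappa_q$ arise in \emph{that} lemma, from the elementary bound $(1+\ln u)^q\leq\kappa_q\,u$; they are not, as you assert, sharp constants in a bounded-kernel Orlicz inequality.

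Your own inductive step conflates two different mechanisms and, as stated, does not close. You invoke a sub-Weibull estimate for bounded degenerate order-$r$ kernels as a ``preliminary fact'' to be quoted, but that is the very theorem being proved (restricted to bounded $h$); the references in the introduction give such bounds with different and inexplicit constants, so this cannot yield the stated $A_r,B_r,C_r$. Separately you propose to ``apply the order-$(r-1)$ inequality conditionally on $X_k$, then average over $X_k$''; however the induction hypothesis controls the tail of $\max_n\abs{U'_n}$, not of an individual increment $V_k$, and you provide no device to recombine per-$k$ tail bounds on $V_k$ into a tail bound on $\max_n\abs{\sum_{k\leq n}V_k}$. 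In the paper that recombination is exactly what the martingale inequality plus convex ordering accomplish, and it is the idea missing from your sketch. The truncation/recentring difficulty you flag is genuine, but the paper's route never meets it.
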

 
Let us give some comments on the result. 
 
\begin{Remark}
The second term of the
 right-hand-side of \eqref{eq:inegalite_exp_cas_degenere} is of order 
 $\E{Y^2 \log\pr{1+Y}^{q_{r}}\mathbf 1\ens{Y>1}}$, where $Y= 
 \abs{h\pr{X_1,\dots,X_r}}/y$. This implies that Theorem~\ref{thm:inegalite_deviation_U_stats} 
 is useful only if  $\E{Y^2 \log\pr{1+Y}^{q_{r}} }$ is finite. 
\end{Remark}
\begin{Remark}
Proposition~2.3 in \cite{MR1235426} gives also an exponential inequality 
when $h$ is bounded. Theorem~\ref{thm:inegalite_deviation_U_stats} gives 
a similar result up to the involved constants. 
\end{Remark}
\begin{Remark}
The bigger $y$ is, the smaller the second term of the right hand side of \eqref{eq:inegalite_exp_cas_degenere} is but the higher the first term of the right hand side is. Therefore, 
when applying this inequality, we try to use it with an appropriate value of $y$.
\end{Remark}
\begin{Remark}
The right hand side of \eqref{eq:inegalite_exp_cas_degenere} is independent of $N$. Therefore, 
when Theorem~\ref{thm:inegalite_deviation_U_stats} is applied with a stronger normalization 
than $N^{r/2}$, we can get a decay in $N$. 
\end{Remark}

\begin{Remark}
However, this inequality does not seem 
to be sufficient to deduce directly a satisfactory result for the 
 law of the iterated logarithms. Indeed, one would need to control 
$\sum_{N\geq r}\PP\ens{\max_{r\leq n\leq 2^N}\abs{U_n}>2^{Nr/2} a \sqrt{\log N} }  $, 
which forces the choice $x=  a \sqrt{\log N} $ for each fixed $N$ and one should choose $y<3^rx$ hence 
we would need exponential moments for $\abs{h\pr{X_1,\dots,X_r}}$, which is of course 
suboptimal.
\end{Remark}

%

\subsection{Statement for general $U$-statistics with symmetric kernel}

We now would like to give a result similar to 
Theorem~\ref{thm:inegalite_deviation_U_stats} but 
without assuming that the $U$-statistic is degenerated. To this aim, we use the 
Hoeffding decomposition. 
Here we assume that the kernel $h\colon S^r\to\R$ is symmetric in the 
sense that for all $x_1,\dots,x_r\in S$ and all permutation 
$\sigma\colon \ens{1,\dots,r}\to \ens{1,\dots,r}$, 
\begin{equation}
h\pr{x_{\sigma\pr{1}},\dots,x_{\sigma\pr{r}}}= 
h\pr{x_1,\dots,x_r}.
\end{equation}

The $U$-statistic involved in Theorem~\ref{thm:inegalite_deviation_U_stats} are such that 
\begin{equation}
\E{h\pr{X_1,\dots,X_r}\mid \sigma\pr{X_i, 1\leq i\leq r,i\neq j   }}=0
\end{equation}
almost surely for all $j\in \ens{1,\dots,r}$. In order to extend the result to a larger class 
of $U$-statistic, we need to define a more general notion of degeneratedness. 

\begin{Definition}
Let $\pr{X_k}_{k\geq 1}$ be an i.i.d. sequence with values in the 
measurable space $\pr{S,\Sca}$. Let $h\colon S^r\to \R$ be a measurable 
function such that $Y:= h\pr{X_1,\dots,X_r}$ is integrable. Denote 
$\Fca_j$ the $\sigma$-algebra generated by the random variables 
$X_1,\dots, X_j$. 
We say that the $U$-statistic $U_{r,n}\pr{h}$ is degenerated of order 
$i-1$ for some $i\in\ens{2,\dots,r}$ if $\E{Y\mid \Fca_{i-1}}=0$ 
almost surely but $\E{Y\mid \Fca_{i}}$ is not equal to zero almost surely.
\end{Definition}

We express the $U$-statistic associated to this kernel $h$ as a sum of 
$U$-statistics of order $k$ with symmetric kernel. Define 
\begin{equation}
\pi_{k,r}h\pr{x_1,\dots,x_k}:=
\pr{\delta_{x_1}-\PP_{X_1}} \dots \pr{\delta_{x_k}-\PP_{X_1}}\PP_{X_1}^{r-k}h, 
\end{equation}
where $Q_1\dots Q_r h$ is defined as 
\begin{equation}
Q_1\dots Q_r h= \int\dots\int h\pr{x_1,\dots,x_r}dQ_1\pr{x_1}\dots 
dQ_r\pr{x_r}.
\end{equation}
Then the following equality holds:
\begin{equation}\label{eq:decomposition_de_Hoeffding}
\binom{n}{r}U_{r,n}\pr{h-\theta}=\sum_{k=1}^k\binom{r}{k}\binom{n}{k}U_{k,n}\pr{h_k},
\end{equation}
where $\theta:=\E{h\pr{X_1,\dots,X_n}}$ 
and $U_{k,n}\pr{h_k}$ is a generated $U$-statistic of order $k$.

If $U_{r,n}\pr{h}$ is degenerated of order $i-1$ for 
some $i\in\ens{2,\dots,r}$, then 
the first $i-1$ terms in \eqref{eq:decomposition_de_Hoeffding} vanish (and $\theta=0$) 
hence 
\begin{equation}\label{eq:decomposition_de_Hoeffding_deg_ordre_i_1}
\binom{n}{r}U_{r,n}\pr{h }=\sum_{k=i}^r\binom{r}{k}\binom{n}{k}U_{k,n}\pr{h_k}.
\end{equation}
Therefore, appling Theorem~\ref{thm:inegalite_deviation_U_stats} 
to each degenerated $U$-statistic $U_{k,n}\pr{h_k}$ gives 
the following result.

\begin{Corollary}\label{cor:extension_inegalite_de_deviation}
Let  $r\geq 1$ be an integer,   $\pr{S,\Sca}$ be measurable space, $h\colon 
S^r\to \R$ be a measurable function (with $S^r$ induced with the product 
$\sigma$-algebra) and let $\pr{X_i}_{i\geq 1}$ be an i.i.d. sequence of 
$S$-valued random variables. Assume that $h$ is degenerated of order 
$i-1$ for some $i\in\ens{2,\dots,r}$. Then for all positive $x$ 
and $y$ such that $x/y>3^r$, 
\begin{multline}\label{eq:extension_inegalite_de_deviation_non_deg}
\PP\ens{\max_{r\leq n\leq N}\abs{U_n}>N^{r-\frac{i}{2}}x} 
\leq A_{r}\exp\pr{- \frac 12\pr{\frac{x}y}^{ \frac{2}{i   }}}
\\+B_{r}\sum_{k=i}^r\int_{1 }^{+\infty}
\PP\ens{\abs{\E{h\pr{X_1,\dots,X_r}\mid X_1,\dots,X_k   }}> y 
N^{ \frac{ k-i }{2}}u ^{1/2} C_{r}   } 
\pr{1+\ln\pr{ u}}^{q_{k}}\mathrm du, 
\end{multline}
where $q_{k }=\frac{ \pr{k-1}k}{2}$ and the constants 
$A_{r}$, $B_{r}$ and $C_{r}$ depend only on $r$.
\end{Corollary}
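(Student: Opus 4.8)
The plan is to reduce the statement, via the Hoeffding decomposition \eqref{eq:decomposition_de_Hoeffding_deg_ordre_i_1}, to a finite sum of degenerated $U$-statistics, apply Theorem~\ref{thm:inegalite_deviation_U_stats} to each summand, and reassemble the pieces. First I would fix $n$ with $r\leq n\leq N$ and divide \eqref{eq:decomposition_de_Hoeffding_deg_ordre_i_1} by $\binom{n}{r}$ to write
\begin{equation*}
U_{r,n}(h)=\sum_{k=i}^r\binom{r}{k}\frac{\binom{n}{k}}{\binom{n}{r}}U_{k,n}(h_k).
\end{equation*}
Using $\binom{n}{k}/\binom{n}{r}\asymp n^{k-r}$ (more precisely bounded above by a constant $c_{r}$ times $n^{k-r}$ for $n\geq r$), and the fact that $n\leq N$, one gets $\max_{r\leq n\leq N}\abs{U_{r,n}(h)}\leq c_{r}\sum_{k=i}^r N^{k-r}\max_{k\leq n\leq N}\abs{U_{k,n}(h_k)}$ after absorbing the combinatorial factors $\binom{r}{k}$ into the constant. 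Hence the event $\{\max_{r\leq n\leq N}\abs{U_n}>N^{r-i/2}x\}$ is contained in the union over $k\in\{i,\dots,r\}$ of events of the form $\{\max_{k\leq n\leq N}\abs{U_{k,n}(h_k)}> c_{r}' N^{r-i/2-(k-r)}x\}=\{\max_{k\leq n\leq N}\abs{U_{k,n}(h_k)}> c_{r}' N^{k/2}\cdot N^{k/2-i/2}\cdot\tilde x\}$ for suitable constants; the normalization $N^{k/2}$ is exactly the one Theorem~\ref{thm:inegalite_deviation_U_stats} expects, and the leftover power $N^{(k-i)/2}$ is what will appear inside the probability in the tail term.

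Next, for each $k\in\{i,\dots,r\}$, the kernel $h_k=\pi_{k,r}h$ is symmetric and the $U$-statistic $U_{k,n}(h_k)$ is degenerated in the sense of \eqref{eq:condition_de_degeneration} (this is the defining property of the Hoeffding projection), so Theorem~\ref{thm:inegalite_deviation_U_stats} applies with $r$ replaced by $k$, with $x$ replaced by $x\cdot N^{(k-i)/2}$ up to constants, and with the same $y$ (rescaled by a constant). The condition $x/y>3^r\geq 3^k$ guarantees the hypothesis $x/y>3^k$ of the theorem for each $k$ (the extra factor $N^{(k-i)/2}\geq 1$ only helps). This produces an exponential term $A_k\exp(-\tfrac12(\tilde x/\tilde y)^{2/k})$ and a tail integral involving $\PP\{\abs{h_k(X_1,\dots,X_k)}>\tilde y u^{1/2}C_k\}$; since $k\geq i$, the exponent $2/k\leq 2/i$, so after adjusting constants all exponential terms are dominated by $A_{r}\exp(-\tfrac12(x/y)^{2/i})$, and the combinatorial constants $A_k,B_k,C_k$ for $k\leq r$ can be absorbed into $r$-dependent constants $A_{r},B_{r},C_{r}$.

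The final bookkeeping step is to identify $h_k(X_1,\dots,X_k)$ with $\E{h(X_1,\dots,X_r)\mid X_1,\dots,X_k}$ up to lower-order Hoeffding projections. In fact $\pi_{k,r}h(X_1,\dots,X_k)$ is a signed combination of conditional expectations $\E{h\mid X_{j}, j\in J}$ over subsets $J\subseteq\{1,\dots,k\}$, so $\abs{h_k(X_1,\dots,X_k)}\leq 2^k\max_{|J|\leq k}\abs{\E{h\mid X_j,j\in J}}$, and by the tower property and stationarity each such conditional expectation has the same law as $\E{h(X_1,\dots,X_r)\mid X_1,\dots,X_m}$ for some $m\leq k$; using that $u\mapsto u^{1/2}$ and $m\mapsto$ the tail are monotone one bounds every term by the $k=$ (largest) contribution, which is why only $\E{h\mid X_1,\dots,X_k}$ for $k\in\{i,\dots,r\}$ survives in \eqref{eq:extension_inegalite_de_deviation_non_deg}, together with the factor $N^{(k-i)/2}$ inherited from the previous paragraph. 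The main obstacle, and the only place requiring real care, is this last step: controlling the Hoeffding projection $\pi_{k,r}h$ by the conditional expectations $\E{h\mid X_1,\dots,X_k}$ with constants depending only on $r$, and checking that the dangling power of $N$ lands in the right place inside the probability rather than in front of it. Everything else is monotonicity, the union bound, and absorbing constants.
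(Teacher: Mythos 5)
Your overall strategy coincides with the paper's (Hoeffding decomposition, union bound over $k\in\ens{i,\dots,r}$, then Theorem~\ref{thm:inegalite_deviation_U_stats} applied to each degenerated projection $h_k$ with rescaled $x$ and $y$), but the crucial bookkeeping step is wrong. For the \emph{unnormalized} $U$-statistics of \eqref{eq:definition_U_stat}, which are what $U_n$ means in the corollary, the decomposition reads $U_{r,n}\pr{h}=\sum_{k=i}^r\binom{n-k}{r-k}U_{k,n}\pr{h_k}$: each $k$-subset of indices is completed to an $r$-subset in $\binom{n-k}{r-k}$ ways, so the coefficient is of order $n^{r-k}$, \emph{increasing} in $n$, hence bounded by $N^{r-k}/\pr{r-k}!$ for $n\leq N$. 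This is exactly what the paper uses, and it is what makes the normalization $N^{r-i/2}$ and the factor $N^{\pr{k-i}/2}$ inside the tail probabilities come out: one gets $N^{-\pr{r-i/2}}\max_n\abs{U_{r,n}\pr{h}}\leq\sum_{k=i}^r\frac{1}{\pr{r-k}!}N^{i/2-k}\max_n\abs{U_{k,n}\pr{h_k}}$ and then applies the theorem with $\widetilde{x}=N^{\pr{k-i}/2}x\pr{r-k}!/r$ \emph{and} $\widetilde{y}=N^{\pr{k-i}/2}y\pr{r-k}!/r$, so that $\widetilde{x}/\widetilde{y}=x/y$ and the threshold $yN^{\pr{k-i}/2}u^{1/2}C$ appears inside the probability. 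You instead read off the coefficient (from the displayed identity \eqref{eq:decomposition_de_Hoeffding_deg_ordre_i_1}, which is written in the normalized convention) as $\binom{r}{k}\binom{n}{k}/\binom{n}{r}\asymp n^{k-r}$ and then bound $n^{k-r}\leq N^{k-r}$ for $n\leq N$; since $k-r\leq 0$ this inequality goes the wrong way, and the symptom shows up immediately: your claimed identity of events forces $\widetilde{x}$ to carry a factor $N^{2\pr{r-k}}$, which is not a ``suitable constant''. Moreover, keeping ``the same $y$ (rescaled by a constant)'' cannot produce the factor $N^{\pr{k-i}/2}$ inside the tail term; $y$ must be rescaled by $N^{\pr{k-i}/2}$ as well.

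Two further points. First, your absorption of the exponential terms goes in the wrong direction: for $x/y>1$ and $k>i$ one has $\exp\pr{-\frac12\pr{x/y}^{2/k}}\geq\exp\pr{-\frac12\pr{x/y}^{2/i}}$, so the terms with $k>i$ are not dominated by the $2/i$ term after adjusting constants (the application of the theorem naturally yields $\sum_k A_k\exp\pr{-\frac12\pr{x/y}^{2/k}}$, of which the $k=i$ term is the \emph{smallest}); this cannot be fixed by ``adjusting constants''. Second, your reduction of $\abs{h_k}$ to $\E{h\pr{X_1,\dots,X_r}\mid X_1,\dots,X_k}$ via the $2^k$ terms of $\pi_{k,r}$ is a reasonable idea (the paper does not spell this step out), but ``monotonicity'' is not a justification: tails of $\E{h\mid X_1,\dots,X_m}$ for $m<k$ are not pointwise dominated by those of $\E{h\mid X_1,\dots,X_k}$; one needs the conditional-expectation tail bound \eqref{eq:inegalites_queues_generale} (equivalently Lemma~\ref{lem:ordre_convexe}), at the cost of an extra integral that must then be merged with the $u$-integral.
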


When $h\pr{X_1,\dots,X_n}$ has a finite exponential moments, 
it turns out that a simpler upper bound can be given. 

\begin{Corollary}\label{cor:inegalite_moments_exp}
 Let  $r\geq 1$ be an integer and $\gamma>0$. There are constants 
 $x_{r,\gamma}$, $A_{r_\gamma}$ and $B_{r,\gamma}$ such that if 
 $\pr{S,\Sca}$ is a measurable space, $h\colon 
S^r\to \R$ is a measurable function (with $S^r$ induced with the product 
$\sigma$-algebra),   $\pr{X_i}_{i\geq 1}$ an i.i.d. sequence of 
$S$-valued random variables and $h$ is degenerated of order 
$i-1$ for some $i\in\ens{2,\dots,r}$, then for all $x\geq x_{r,i,\gamma}
:=3^{r\frac{2+i\gamma}{i\gamma}}2^{-1/\gamma} C_{r }^{-1} $ (with $C_{r }$ 
like in Corollary~\ref{cor:extension_inegalite_de_deviation})
 and 
all $N\geq r$, 

\begin{equation}\label{eq:inegalite_Ustats_moments_expo}
\PP\ens{\max_{r\leq n\leq N}\abs{U_n}>N^{r -i/2}x} 
\leq A_{r,\gamma}
 \exp\pr{-   B_{r,\gamma}     x ^{\frac{2\gamma}{i\gamma+2}   }      }
\E{\exp\pr{ \abs{h\pr{X_1,\dots,X_r} }^\gamma   }}.
\end{equation}

\end{Corollary}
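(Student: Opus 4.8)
The plan is to deduce \eqref{eq:inegalite_Ustats_moments_expo} from Corollary~\ref{cor:extension_inegalite_de_deviation} by choosing the free parameter $y$ to be a well-tuned power of $x$, and to control the integral term in \eqref{eq:extension_inegalite_de_deviation_non_deg} by means of the exponential moment assumption. Concretely, I would apply that corollary with
\begin{equation*}
y:=\pr{2^{-1/\gamma}C_r^{-1}}^{\frac{i\gamma}{i\gamma+2}}\,x^{\frac{2}{i\gamma+2}},
\end{equation*}
so that $x/y=\bigl(2^{1/\gamma}C_r\,x\bigr)^{i\gamma/(i\gamma+2)}$ and the hypothesis $x/y>3^r$ becomes precisely $x>x_{r,i,\gamma}$. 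The point of this choice is that $\pr{x/y}^{2/i}$ and $\pr{yC_r}^{\gamma}$ then both equal explicit positive constants (depending only on $r,i,\gamma$) times $x^{2\gamma/(i\gamma+2)}$, which is the source of the exponent $\tfrac{2\gamma}{i\gamma+2}$ in \eqref{eq:inegalite_Ustats_moments_expo}; a short computation also gives $\pr{yC_r}^\gamma\geq\tfrac12 9^{r/i}\geq1$ whenever $x\geq x_{r,i,\gamma}$. With this, the first term $A_r\exp\pr{-\tfrac12\pr{x/y}^{2/i}}$ of \eqref{eq:extension_inegalite_de_deviation_non_deg} already has the required shape $A_r\exp\pr{-\lambda_{r,i,\gamma}\,x^{2\gamma/(i\gamma+2)}}$.

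For the integral term, set $Z_k:=\abs{\E{h\pr{X_1,\dots,X_r}\mid X_1,\dots,X_k}}$. Markov's inequality applied to the increasing function $t\mapsto\exp\pr{t^\gamma}$ gives $\PP\ens{Z_k>t}\leq\exp\pr{-t^\gamma}\E{\exp\pr{Z_k^\gamma}}$, and since $Z_k\leq\E{\abs{h\pr{X_1,\dots,X_r}}\mid X_1,\dots,X_k}$ the conditional Jensen inequality yields $\E{\exp\pr{Z_k^\gamma}}\leq c_\gamma\E{\exp\pr{\abs{h\pr{X_1,\dots,X_r}}^\gamma}}$ with $c_\gamma=1$ for $\gamma\geq1$. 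For $0<\gamma<1$, where $t\mapsto\exp\pr{t^\gamma}$ is not convex near the origin, one first replaces it on $[0,t_0]$, with $t_0:=\pr{(1-\gamma)/\gamma}^{1/\gamma}$, by its tangent line at $t_0$; this produces a convex function $\widetilde\psi$ satisfying $\exp\pr{t^\gamma}\leq\widetilde\psi(t)\leq e^{t_0^\gamma}\exp\pr{t^\gamma}$, and applying conditional Jensen to $\widetilde\psi$ gives the same estimate with $c_\gamma=e^{t_0^\gamma}$. Moreover, since $i\leq k\leq r\leq N$, the factor $N^{(k-i)/2}$ inside the probability in \eqref{eq:extension_inegalite_de_deviation_non_deg} is $\geq1$ and may simply be dropped, so that the resulting bound is independent of $N$.

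Combining these and writing $u^{\gamma/2}=1+\pr{u^{\gamma/2}-1}$, each summand of the integral term is at most
\begin{equation*}
c_\gamma\,\E{\exp\pr{\abs{h\pr{X_1,\dots,X_r}}^\gamma}}\,e^{-\pr{yC_r}^\gamma}\int_1^{+\infty}e^{-\pr{yC_r}^\gamma\pr{u^{\gamma/2}-1}}\pr{1+\ln u}^{q_k}\,\mathrm du,
\end{equation*}
and since $\pr{yC_r}^\gamma\geq1$ the remaining integral is bounded by the finite constant $D_{k,\gamma}:=\int_1^{+\infty}e^{-(u^{\gamma/2}-1)}\pr{1+\ln u}^{q_k}\,\mathrm du$. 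Summing over $k\in\ens{i,\dots,r}$, substituting $\pr{yC_r}^\gamma=\mu_{r,i,\gamma}\,x^{2\gamma/(i\gamma+2)}$, and using $\E{\exp\pr{\abs{h\pr{X_1,\dots,X_r}}^\gamma}}\geq1$ to endow the first term with this factor as well, one arrives at \eqref{eq:inegalite_Ustats_moments_expo} with $B_{r,\gamma}=\min\pr{\tfrac12\lambda_{r,i,\gamma},\mu_{r,i,\gamma}}$ and $A_{r,\gamma}=A_r+B_r\,c_\gamma\sum_{k=i}^r D_{k,\gamma}$; replacing these by their worst values over $i\in\ens{2,\dots,r}$ makes the constants depend only on $r$ and $\gamma$.

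I do not expect any genuine obstacle beyond Corollary~\ref{cor:extension_inegalite_de_deviation} itself. The one point that must be handled carefully is the bookkeeping in the first step: verifying that the proposed $y$ converts the constraint $x/y>3^r$ into exactly the threshold $x\geq x_{r,i,\gamma}$ while simultaneously making the two exponential rates proportional to the same power of $x$. The treatment of the range $0<\gamma<1$ is a routine convexity argument, and the convergence of the residual integrals $D_{k,\gamma}$ is immediate since $u^{\gamma/2}\to+\infty$.
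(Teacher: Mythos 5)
Your argument is correct and follows essentially the same route as the paper: you apply Corollary~\ref{cor:extension_inegalite_de_deviation} with exactly the value of $y$ the paper obtains by equating $\tfrac12\pr{x/y}^{2/i}$ with $\pr{yC_r}^\gamma$, bound the integral term via Markov's inequality with $\exp\pr{t^\gamma}$ and the condition $\pr{yC_r}^\gamma\geq 1$, and use $\E{\exp\pr{\abs{h\pr{X_1,\dots,X_r}}^\gamma}}\geq 1$ to merge the two terms. Your explicit construction of the convex majorant for $0<\gamma<1$ and the dropping of the factor $N^{\pr{k-i}/2}\geq 1$ merely fill in details the paper states without proof (its constant $c_\gamma$ and its reduction to $k=i$), so this is the same proof, slightly more detailed.
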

 \begin{Remark}
Corollary~1.2 in \cite{MR1655931} gives a result in the same spirit, but with the 
following differences.
\begin{itemize}
\item Our inequality gives a bound on the tail probability of $\max_{r\leq n\leq N}\abs{U_n}$, 
while \cite{MR1655931} gives a control of the tail of $\abs{U_N}$.
\item Our inequality show explicitely how the 
right hand side depends on $\E{\exp\pr{ \abs{h\pr{X_1,\dots,X_r} }^\gamma   }}$. It 
allows in particular to apply the inequality to $R\cdot h$, $R>0$, instead of $h$ when 
$\E{\exp\pr{ R\abs{h\pr{X_1,\dots,X_r} }^\gamma   }}$ is finite.
\item The case $0<\gamma\leq 2$ was addressed in \cite{MR1655931}, whereas 
we cover the case $\gamma>0$.
\end{itemize} 
\end{Remark}

\subsection{Application to convergence rates in the strong law of large numbers}


Let $U_n$ be the $U$-statistic defined by 
\eqref{eq:definition_U_stat}. Suppose that 
$\E{\abs{h\pr{X_1,\dots,X_r}}}$ is finite. Then 

\begin{equation}
 \frac 1{n^r} \sum_{i\in I_n^r}
 \pr{h\pr{X_{i_1},\dots,X_{i_r}}-
 \E{h\pr{X_{i_1},\dots,X_{i_r}}}}\to 0\mbox{ a.s.}
\end{equation}

If we assume that the $U$-statistic is degenerated of 
order $i-1$ and if we impose more restrictive
integrability conditions, an other normalization than 
$n^r$ can be chosen.

\begin{Theorem}[Theorem 1 in \cite{MR1607435}]
 Let $h\colon \R^r\to\R$ be a symmetric function, $r\geq 2$ and 
 let $\pr{X_i}_{i\geq 1}$ be an i.i.d. sequence of random variables.
 Suppose that $h$ is degenerated of order $i-1$ where 
 $i\in\ens{2,\dots,r}$. Let $q\in\pr{1,2r/\pr{2r-i}}$. Suppose 
 that for all $j\in\ens{i,\dots,r}$, 
 \begin{equation}
  \E{h\pr{X_1,\dots,X_r}\mid X_1,\dots ,X_j}\in
  \mathbb L^{\frac{jq}{r-\pr{r-j}q}}.
 \end{equation}
Then $n^{-r/q}U_n\to 0$ almost surely.
\end{Theorem}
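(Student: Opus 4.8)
The plan is to derive the almost sure convergence $n^{-r/q}U_n\to 0$ from the maximal exponential inequality of Corollary~\ref{cor:extension_inegalite_de_deviation} by a standard Borel--Cantelli argument along a dyadic subsequence. First I would reduce to the subsequence $n=2^N$: since $q<2r/(2r-i)$ means $r/q>r-i/2$, the normalization $n^{r/q}$ is strictly stronger than the normalization $n^{r-i/2}$ appearing in \eqref{eq:extension_inegalite_de_deviation_non_deg}, so writing $n^{-r/q}\abs{U_n}\leq \pr{2^N}^{-r/q}\max_{2^{N-1}\leq n\leq 2^N}\abs{U_n}$ for $2^{N-1}\leq n\leq 2^N$ and applying Corollary~\ref{cor:extension_inegalite_de_deviation} with $N$ replaced by $2^N$ and $N^{r-i/2}x$ chosen to equal $\eps 2^{Nr/q}$, it suffices to prove $\sum_{N\geq 1}\PP\ens{\max_{r\leq n\leq 2^N}\abs{U_n}>2^{Nr/q}\eps}<\infty$ for every $\eps>0$.

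Next, in the inequality \eqref{eq:extension_inegalite_de_deviation_non_deg} with these substitutions, the parameter $x$ grows like $2^{N(r/q-r+i/2)}$, which is an exponentially growing quantity since the exponent $r/q-r+i/2$ is positive. I would choose $y$ proportional to $x$ (say $y=x/3^{r+1}$, which is admissible), so that the first term $A_r\exp\pr{-\tfrac12(x/y)^{2/i}}$ is a fixed constant less than $1$ — this is not summable, so instead I must be more careful: the right choice is to let $y$ grow slower than $x$, e.g. $y=x^{1-\delta}$ for small $\delta>0$, so that $x/y=x^\delta\to\infty$ and the exponential term $A_r\exp\pr{-\tfrac12 x^{2\delta/i}}$ becomes summable in $N$ (it decays like a stretched exponential of $2^{Nc}$). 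With this choice of $y$, the second term becomes $B_r\sum_{k=i}^r\int_1^\infty\PP\ens{\abs{\E{h\mid X_1,\dots,X_k}}>yN^{(k-i)/2}u^{1/2}C_r}(1+\ln u)^{q_k}\,\mathrm du$, and I must show that summing this over $N$ gives a finite total.

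For the second term I would use the integrability hypothesis $\E{h(X_1,\dots,X_r)\mid X_1,\dots,X_j}\in\mathbb L^{p_j}$ with $p_j=jq/(r-(r-j)q)$. The integral over $u$ of a tail probability against a logarithmic weight is, up to constants, a moment of the form $\E{Z_k^2(1+\ln(1+Z_k))^{q_k}}$ where $Z_k=\abs{\E{h\mid X_1,\dots,X_k}}/(yN^{(k-i)/2}C_r)$; so the $k$-th term is of order $(yN^{(k-i)/2})^{-p_k}\E{\abs{\E{h\mid X_1,\dots,X_k}}^{p_k}}$ times a logarithmic factor in $N$ (provided $p_k\geq 2$, which one checks follows from $q>1$ and the range of $q$), and since $y\asymp 2^{Nc'}$ with $c'>0$, each such term decays geometrically in $N$ and is therefore summable; the extra polynomial and logarithmic factors in $N$ coming from $N^{-(k-i)p_k/2}$ and $(1+\ln u)^{q_k}$ do not affect summability. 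Summing over the finitely many $k\in\ens{i,\dots,r}$ and over $N$ gives the claim, and Borel--Cantelli then yields $2^{-Nr/q}\max_{r\leq n\leq 2^N}\abs{U_n}\to 0$ a.s., whence $n^{-r/q}U_n\to 0$ a.s. by the usual sandwiching between consecutive dyadic blocks. The main obstacle I anticipate is bookkeeping the exponents: one must verify that the range $q\in(1,2r/(2r-i))$ is exactly what makes (a) $r/q-r+i/2>0$ (so the dyadic normalization gains an exponential factor), (b) $p_k\geq 2$ for all $k\in\ens{i,\dots,r}$ so that the $u$-integral converges and reproduces an $\mathbb L^{p_k}$-moment, and (c) the integrability exponent $p_k=jq/(r-(r-j)q)$ stays finite, i.e.\ $r-(r-j)q>0$; assembling these three constraints cleanly is where the care is needed, but each is a routine inequality once the substitutions are fixed.
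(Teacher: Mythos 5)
There is a genuine gap, and it sits exactly at the point you flagged as ``routine bookkeeping,'' namely your claim (b) that $p_k=kq/\pr{r-\pr{r-k}q}\geq 2$ for all $k\in\ens{i,\dots,r}$. This is false for every $k$ in that range. Since $i\leq r$ we have $2r/\pr{2r-i}\leq 2$, so the hypothesis forces $q<2$; for $k=r$ the exponent is $p_r=q<2$, and more generally $p_k<2\iff q<2r/\pr{2r-k}$, which holds because $2r/\pr{2r-k}\geq 2r/\pr{2r-i}>q$. Consequently the hypotheses of the theorem only provide moments of order strictly less than $2$ for each $\E{h\pr{X_1,\dots,X_r}\mid X_1,\dots,X_k}$, whereas the $u$-integral in \eqref{eq:extension_inegalite_de_deviation_non_deg} is, up to constants, $\E{Z_k^2\pr{1+\ln\pr{1+Z_k}}^{q_k}\mathbf 1\ens{Z_k>1}}$ and so requires an $\el^2\log^{q_k}\el$-type condition: if the conditional expectation has a tail of order $t^{-p_k}\pr{\log t}^{-2}$ (which is compatible with membership in $\el^{p_k}$), then $\int_1^{+\infty}\PP\ens{Z_k>cu^{1/2}}\pr{1+\ln u}^{q_k}\mathrm du$ behaves like $\int_1^{+\infty}u^{-p_k/2}\pr{\ln u}^{q_k-2}\mathrm du=+\infty$ since $p_k/2<1$. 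So for each fixed $N$ and each admissible choice of $y$ the right-hand side of the corollary can be $+\infty$ and the bound is vacuous; no choice of $y=x^{1-\delta}$ or summation trick over dyadic blocks can repair this, because the failure is already at the level of a single $N$. (Markov plus integration would need $p_k>2$, as you half-noticed, and that is never available here.)

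For the comparison you asked about: the paper does not prove this statement at all — it is quoted as Theorem~1 of \cite{MR1607435} (Teicher) purely as background, and immediately afterwards the author remarks that the exponential inequality of this paper ``is not the most suitable in this context,'' precisely because Theorem~\ref{thm:inegalite_deviation_U_stats} and Corollary~\ref{cor:extension_inegalite_de_deviation} are only useful under integrability slightly stronger than a second moment (see the Remark following Theorem~\ref{thm:inegalite_deviation_U_stats}), while the Marcinkiewicz-type law above lives in the regime of polynomial moments of order below $2$. A correct proof in this regime requires an additional idea that your outline does not contain, typically a truncation of the kernel at a level depending on the block (with re-centering/re-degeneration of the truncated kernel and a separate treatment of the truncated-away part), as in Teicher's original argument and in the Giné--Zinn approach; the dyadic blocking and Borel--Cantelli skeleton of your proposal is fine, but the deviation inequality you feed into it cannot be the one from this paper applied to the untruncated kernel.
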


Information on the convergence rates in the Marcinkiewicz 
law of large number have been obtained in \cite{MR1227625} 
and results in the spirit of Baum-Katz \cite{MR198524} for partial sums
   has ben obtained in  \cite{MR614652,MR903815}, in all the cases under polynomial 
   moment conditions, that is, finiteness of $\E{\abs{h\pr{X_1,\dots,X_r}}^q}$ 
   for some $q$.

Our setting would also allow to derive result under similar assumptions but it seems 
that the inequality we obtain is not the most suitable in this context. 
However, under finite exponential moments, one can use 
 Corollary~\ref{cor:inegalite_moments_exp} in order to quantify 
the convergence.

\begin{Theorem}\label{thm_Baum_Katz_Ustats}
Let $\pr{S,\Sca}$ be a measurable space,  $h\colon S^r\to\R$ be a symmetric function, $r\geq 2$ and 
 let $\pr{X_i}_{i\geq 1}$ be an i.i.d. sequence of random variables with values in $S$.
 Suppose that $h$ is degenerated of order $i-1$ where 
 $i\in\ens{2,\dots,r}$. Let $\alpha\in \pr{r-i/2,r}$. If there exists a positive 
 $\gamma>0$ such that $\E{\exp\pr{R \abs{h\pr{X_1,\dots,X_r}}^\gamma  }}<+\infty$ is 
 finite for all $R$,  then
 \begin{equation}
 \forall \eps>0, \quad \sum_{N\geq 1}
 \exp\pr{
 2^{N\pr{\alpha-\pr{r-i/2}      }\frac{2\gamma}{i\gamma+2}    } 
 }\PP\ens{\max_{r\leq n\leq 2^N}\abs{U_n}>\eps 2^{N\alpha}}<+\infty.
 \end{equation}
 \end{Theorem}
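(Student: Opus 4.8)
The plan is to deduce this from Corollary~\ref{cor:inegalite_moments_exp}, applied not to $h$ itself but to a rescaled kernel $Rh$ with $R$ chosen large in terms of $\eps$. Throughout, abbreviate $\beta:=\pr{\alpha-\pr{r-i/2}}\frac{2\gamma}{i\gamma+2}$, which is strictly positive precisely because $\alpha>r-i/2$; this lower bound on $\alpha$ is the only feature of the interval $\pr{r-i/2,r}$ that the argument uses (the upper bound $\alpha<r$ plays no role here, it merely singles out the interesting range). Let $A_{r,\gamma}$, $B_{r,\gamma}$, $x_{r,i,\gamma}$ denote the constants furnished by Corollary~\ref{cor:inegalite_moments_exp}.

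First I would fix $\eps>0$ and, using that the exponent $\frac{2\gamma}{i\gamma+2}$ is positive, choose $R>0$ so large that $B_{r,\gamma}\pr{R\eps}^{2\gamma/\pr{i\gamma+2}}\geq 2$. Then I would observe that $Rh$ is symmetric, is still degenerated of order $i-1$ (multiplying by a nonzero constant changes none of the conditional-expectation conditions), and satisfies $\E{\exp\pr{\abs{Rh\pr{X_1,\dots,X_r}}^\gamma}}=\E{\exp\pr{R^\gamma\abs{h\pr{X_1,\dots,X_r}}^\gamma}}<+\infty$ by the hypothesis (this is where finiteness ``for all $R$'' is used). Since the $U$-statistic with kernel $Rh$ is $RU_n$, Corollary~\ref{cor:inegalite_moments_exp} applied to $Rh$ gives, for every $x\geq x_{r,i,\gamma}$ and every $M\geq r$,
\[
\PP\ens{\max_{r\leq n\leq M}\abs{U_n}>M^{r-i/2}x/R}\leq A_{r,\gamma}\exp\pr{-B_{r,\gamma}x^{2\gamma/\pr{i\gamma+2}}}\E{\exp\pr{R^\gamma\abs{h\pr{X_1,\dots,X_r}}^\gamma}}.
\]

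Next I would specialize this to $M=2^N$ and $x=R\eps\,2^{N\pr{\alpha-\pr{r-i/2}}}$. Because $\alpha>r-i/2$ we have $x\to+\infty$ as $N\to\infty$, so there is an $N_0$ with $2^N\geq r$ and $x\geq x_{r,i,\gamma}$ for all $N\geq N_0$; for such $N$ one checks directly that $M^{r-i/2}x/R=\eps 2^{N\alpha}$ and $x^{2\gamma/\pr{i\gamma+2}}=\pr{R\eps}^{2\gamma/\pr{i\gamma+2}}2^{N\beta}\geq 2\cdot 2^{N\beta}$, whence
\[
\PP\ens{\max_{r\leq n\leq 2^N}\abs{U_n}>\eps 2^{N\alpha}}\leq A_{r,\gamma}\,\E{\exp\pr{R^\gamma\abs{h\pr{X_1,\dots,X_r}}^\gamma}}\exp\pr{-2\cdot 2^{N\beta}}.
\]
To conclude I would split the target series at $N_0$: the finitely many terms with $1\leq N<N_0$ are each finite, and for $N\geq N_0$ the last bound yields
\[
\sum_{N\geq N_0}\exp\pr{2^{N\beta}}\PP\ens{\max_{r\leq n\leq 2^N}\abs{U_n}>\eps 2^{N\alpha}}\leq A_{r,\gamma}\,\E{\exp\pr{R^\gamma\abs{h\pr{X_1,\dots,X_r}}^\gamma}}\sum_{N\geq N_0}\exp\pr{-2^{N\beta}},
\]
which converges since $\beta>0$ makes $2^{N\beta}$ grow (in fact exponentially fast in $N$).

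The one genuinely delicate point is the choice of $R$. A direct application of Corollary~\ref{cor:inegalite_moments_exp} to $h$ would leave the factor $\exp\pr{\pr{1-B_{r,\gamma}\eps^{2\gamma/\pr{i\gamma+2}}}2^{N\beta}}$ in the series, which is \emph{not} summable when $\eps$ is small; rescaling the kernel by a large $R$ — legitimate exactly because all exponential moments of $\abs{h\pr{X_1,\dots,X_r}}^\gamma$ are finite — forces the coefficient of $2^{N\beta}$ strictly below $-1$ uniformly in the (fixed) $\eps$, after which convergence is immediate. I do not anticipate any other obstacle: everything else is bookkeeping with the explicit normalization $M^{r-i/2}$ of Corollary~\ref{cor:inegalite_moments_exp} and the elementary fact that $\PP$ is bounded by $1$ on the finitely many initial terms.
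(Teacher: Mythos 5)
Your proof is correct and follows essentially the same route as the paper: rescale the kernel by a constant depending on $\eps$ (legitimate precisely because all exponential moments of $\abs{h\pr{X_1,\dots,X_r}}^\gamma$ are assumed finite), apply Corollary~\ref{cor:inegalite_moments_exp} at scale $M=2^N$ with $x$ proportional to $2^{N\pr{\alpha-\pr{r-i/2}}}$, and discard the finitely many $N$ for which $x<x_{r,i,\gamma}$. Your normalization $B_{r,\gamma}\pr{R\eps}^{2\gamma/\pr{i\gamma+2}}\geq 2$ is in fact slightly more careful than the paper's choice $C^{2\gamma/\pr{i\gamma+2}}B_{r,\gamma}\geq 1$, which as written only bounds the general term of the series rather than making it summable.
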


An other way to measure the speed of convergence in the law of large numbers is by bounding 
the probabilities of large deviation. In the case of partial sums of an i.i.d. centered 
 sequence $\pr{X_i}_{i\geq 1}$, the involved probability is $\PP\ens{\max_{1\leq n\leq N}\abs{
 \sum_{i=1}^nX_i}>Nx  }$. Extension to martingale with bounded moments have been 
 investigated in \cite{MR1856684} and \cite{MR3005732}.

  In the context of the $U$-statistics, the normalization will depend on the 
 degree of degeneracy. 
 
 \begin{Theorem}\label{thm:large_deviation_Ustats}
Let $\pr{S,\Sca}$ be a measurable space,  $h\colon S^r\to\R$ be a symmetric function, $r\geq 2$ and 
 let $\pr{X_i}_{i\geq 1}$ be an i.i.d. sequence of random variables with values in $S$.
 Suppose that $h$ is degenerated of order $i-1$ where 
 $i\in\ens{2,\dots,r}$.  If there exists a positive 
 $\gamma>0$ such that $M:=\sup_{t>0}\exp\pr{t^\gamma} 
 \PP\ens{   \abs{h\pr{X_1,\dots,X_r}}>t }$ is finite,  then for all $x>0$, 
 \begin{equation}
 \PP\ens{\max_{r\leq n\leq N}\abs{U_n}> N^{r}x}\leq K_1
 \exp\pr{ -K_2 N^{\frac{i\gamma}{2+i\gamma}}x^{\frac{2\gamma }{2+i\gamma} }   },
 \end{equation}
 where $K_1$ and $K_2$ depend on $\gamma$ and $M$. 
 \end{Theorem}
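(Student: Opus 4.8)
The plan is to derive the statement from Corollary~\ref{cor:inegalite_moments_exp} after (i) rescaling the kernel so that the exponential moment appearing there is finite, and (ii) matching the normalisation $N^{r}x$ of the theorem to the normalisation $N^{r-i/2}$ of that corollary.

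Write $Z:=\abs{h\pr{X_1,\dots,X_r}}$. The hypothesis says exactly that $\PP\ens{Z>t}\leq Me^{-t^\gamma}$ for every $t>0$; this is \emph{strictly weaker} than $\E{\exp\pr{Z^\gamma}}<+\infty$, so Corollary~\ref{cor:inegalite_moments_exp} cannot be applied to $h$ itself. However, for any fixed $\rho\in\pr{0,1}$ the tail-integral representation together with the elementary bound $\PP\ens{Z>\rho^{-1}\pr{\ln u}^{1/\gamma}}\leq Mu^{-\rho^{-\gamma}}$ gives
\begin{equation*}
\E{\exp\pr{\pr{\rho Z}^\gamma}}=1+\int_1^{+\infty}\PP\ens{Z>\rho^{-1}\pr{\ln u}^{1/\gamma}}\,\mathrm du\leq 1+M\int_1^{+\infty}u^{-\rho^{-\gamma}}\,\mathrm du=1+\frac{M}{\rho^{-\gamma}-1}=:C_0,
\end{equation*}
which is finite because $\rho^{-\gamma}>1$; fixing once and for all $\rho=1/2$, the constant $C_0$ depends only on $\gamma$ and $M$.

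I would then apply Corollary~\ref{cor:inegalite_moments_exp} to the kernel $\til h:=\rho h$. Since $\rho\neq 0$ we have $\E{\rho h\pr{X_1,\dots,X_r}\mid\Fca_j}=\rho\,\E{h\pr{X_1,\dots,X_r}\mid\Fca_j}$ for every $j$, so $\til h$ is again degenerated of order $i-1$; moreover $U_{r,n}\pr{\til h}=\rho U_n$ and $\E{\exp\pr{\abs{\til h\pr{X_1,\dots,X_r}}^\gamma}}\leq C_0$. Hence, for all $N\geq r$ and all $x'\geq x_{r,i,\gamma}$,
\begin{equation*}
\PP\ens{\max_{r\leq n\leq N}\abs{U_n}>\rho^{-1}N^{r-i/2}x'}=\PP\ens{\max_{r\leq n\leq N}\abs{U_{r,n}\pr{\til h}}>N^{r-i/2}x'}\leq A_{r,\gamma}C_0\exp\pr{-B_{r,\gamma}\pr{x'}^{\frac{2\gamma}{i\gamma+2}}}.
\end{equation*}
Taking $x':=\rho N^{i/2}x$ turns the left-hand side into $\PP\ens{\max_{r\leq n\leq N}\abs{U_n}>N^r x}$, while $\pr{x'}^{2\gamma/(i\gamma+2)}=\rho^{2\gamma/(i\gamma+2)}N^{i\gamma/(i\gamma+2)}x^{2\gamma/(i\gamma+2)}$, so the right-hand side has exactly the asserted form with $K_2:=B_{r,\gamma}\rho^{2\gamma/(i\gamma+2)}$ — for all $N,x$ satisfying $\rho N^{i/2}x\geq x_{r,i,\gamma}$.

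Finally I would dispose of the remaining range $\rho N^{i/2}x<x_{r,i,\gamma}$, where Corollary~\ref{cor:inegalite_moments_exp} is silent: there $N^{i\gamma/(i\gamma+2)}x^{2\gamma/(i\gamma+2)}<\pr{\rho^{-1}x_{r,i,\gamma}}^{2\gamma/(i\gamma+2)}=:L$, so $\exp\pr{-K_2 N^{i\gamma/(i\gamma+2)}x^{2\gamma/(i\gamma+2)}}\geq e^{-K_2L}$ and the trivial bound $\PP\ens{\cdots}\leq 1\leq e^{K_2L}\exp\pr{-K_2 N^{i\gamma/(i\gamma+2)}x^{2\gamma/(i\gamma+2)}}$ already settles this case. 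Merging the two cases with $K_1:=\max\ens{A_{r,\gamma}C_0,\,e^{K_2L}}$ yields the claim, $K_1$ and $K_2$ depending only on $r,i,\gamma$ and $M$ (hence on $\gamma$ and $M$ with $r,i$ fixed, as stated). The only step that really requires attention is the first one: noticing that the tail condition does \emph{not} give finiteness of $\E{\exp\pr{Z^\gamma}}$ and circumventing this by working with $\rho h$, $\rho<1$, then tracking the harmless effect of this rescaling on the constants; everything else is bookkeeping with the powers of $N$ and $x$.
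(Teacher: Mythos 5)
Your proof is correct, but it takes a different route from the paper's. You derive the theorem from Corollary~\ref{cor:inegalite_moments_exp} used as a black box: you first observe (correctly) that the hypothesis $M=\sup_{t>0}e^{t^\gamma}\PP\ens{Z>t}<+\infty$, with $Z=\abs{h\pr{X_1,\dots,X_r}}$, does not give $\E{\exp\pr{Z^\gamma}}<+\infty$, you repair this by passing to the rescaled kernel $\rho h$ with $\rho=1/2$, for which $\E{\exp\pr{\pr{\rho Z}^\gamma}}\leq 1+M/\pr{2^\gamma-1}$, and you then apply the corollary with $x'=\rho N^{i/2}x$ (this is exactly the use of the remark following Corollary~\ref{cor:inegalite_moments_exp} about applying it to $R\cdot h$), finally absorbing the excluded range $\rho N^{i/2}x<x_{r,i,\gamma}$ into the constant via the trivial bound $\PP\leq 1$. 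The paper instead goes back one level: it applies Corollary~\ref{cor:extension_inegalite_de_deviation} with $x$ replaced by $xN^{i/2}$, transfers the sup-tail condition to the conditional expectations $\E{h\pr{X_1,\dots,X_r}\mid X_1,\dots,X_k}$ through the tail inequality \eqref{eq:inegalites_queues_generale}, redoes an integral estimate of the type of $I\pr{y}$ in \eqref{eq:definition_de_I_y} under the tail hypothesis, and then chooses $y=N^{\frac{i}{2+i\gamma}}x^{\frac{2}{2+i\gamma}}$; this explicit optimization of $y$ is precisely what is hidden inside your invocation of Corollary~\ref{cor:inegalite_moments_exp}, so the analytic core is the same, but the packaging differs. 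Your route is shorter, keeps the dependence of $K_1,K_2$ on $M$ and $\gamma$ completely explicit, and treats carefully the small-deviation range imposed by the threshold $x_{r,i,\gamma}$, a point the paper's proof passes over in silence; the paper's route works directly with tail functionals (no conversion to an exponential moment, hence no rescaling) and stays at the level of the conditional-expectation tails that appear naturally in its intermediate bound \eqref{eq:step_large_deviation}. Both yield the claimed exponent $N^{\frac{i\gamma}{2+i\gamma}}x^{\frac{2\gamma}{2+i\gamma}}$, and your constants depend on $r,i,\gamma,M$, which is consistent with the statement.
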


\subsection{Weak invariance principle in Hölder spaces}
 \label{subsec:WIP_Holder}
 In this subsection, we will study an other limit theorem for $U$-statistics: 
 the functional central limit theorem in Hölder spaces. Given a $U$-statistic 
 $U_n$ of order $r$ and kernel $h\colon S^r\to \R$, we define a partial sum process
 by 
 \begin{equation}\label{eq:definition_processus_sommes_partielles}
 \sigma_n\pr{t}:= \frac 1{n^{r/2}}
 \pr{U_{[nt]}-\pr{nt-[nt]} \pr{U_{[nt]+1}- U_{[nt]}    }}, t\in [0,1], n\geq r,
 \end{equation}
 where for $x\in \R$, $[x]$ is the unique integer satisfying $[x]\leq x<[x]+1$. In other words, $\sigma_n\pr{k/n}=U_k$ 
 and the random function $t\mapsto \sigma_n\pr{t}$ is 
 affine on the intervals $\left[k/n,\pr{k+1}/n\right]$.
 In \cite{MR740907}, the convergence in distribution in the 
 Skorohod space $D[0,1]$ of the process $\pr{n^{-r/2}
 U_{[n\cdot ]}}_{n\geq r}$ is studied. In Corollary~1, it is shown that if $U_n$ is degenerated 
 of order $i-1$, $i\in\ens{2,\dots,r}$, then $\pr{n^{-i/2} U_{[n\cdot ]}}_{n\geq r}$ 
 converges in distribution to a process $I_i\pr{h_i}$ symbolically defined as 
 \begin{multline}
 I_i\pr{h_i}\pr{t}=\int\dots\int h_i\pr{x_1,\dots,x_i}\mathbf 1_{[0,t]}\pr{u_1}
 \dots \mathbf 1_{[0,t]}\pr{u_i}W\pr{dx_1,du_1}\dots W\pr{dx_i,du_i},
 \end{multline}
where $W$ denotes the Gaussian measure (see the Appendix~A.1 and A.2 of the 
paper \cite{MR740907}). For $i=2$, the limiting process admits the 
expression $\sum_{j=1}^{+\infty} \lambda_j\pr{B_j^2\pr{t}-t}$, where 
$\pr{B_j\pr{\cdot}}_{j\geq 1}$ are independent standard Brownian motions and 
$\sum_{j=1}^{+\infty} \lambda_j^2$ is finite. In particular, such a process has 
path in Hölder spaces and would be also the limiting process for $\pr{\sigma_n}_{n\geq 1}$ 
when $r=2$.  Therefore, the study of the limiting behavior of 
$\pr{\sigma_n\pr{t}}_{n\geq r}$ in Hölder spaces can be considered. 

This question has been considered in the context of partial sum processes built 
on strictly stationary sequences of random variables, that is, of the form 
\begin{equation}\label{eq:def_processus_sommes_partielles_stationnaire}
W_n\pr{t}:= \frac 1{a_n}\pr{\sum_{i=1}^{[nt]}X_i+\pr{nt-[nt]}X_{[nt]+1}  },
\end{equation}
where $\pr{X_j}_{j\geq 1}$ is a strictly stationary centered sequence and $a_n\to +\infty$. The asymptotic
behaviour of such partial processes in $D[0,1]$ 
under dependence has concentrated a lot of effort; 
see for instence \cite{MR2206313} for a survey on the main results. 

Define $\mathcal R_i$ as the class of the real-valued functions $\rho$ defined on $[0,1]$
which can be expressed as $\rho\pr{t}=t^\alpha L\pr{1/t} $, where $L\colon [1,+\infty)\to \R$ 
is normalized slowly varying at infinity, positive and continuous, $\rho$ is increasing on 
$[0,1]$ and 
\begin{equation}
\lim_{\delta\to 0}\rho\pr{\delta}\delta^{-1/2}\pr{\ln\pr{\delta^{-1}} }^{-i/2}=+\infty.
\end{equation}
Notice that this implies that 
 $0<\alpha\leq 1/2$ and for $\alpha=1/2$, the constraint reads 
 \begin{equation}
 \lim_{\delta\to 0}L\pr{1/\delta} \pr{\ln\pr{\delta^{-1}} }^{-i/2}=+\infty.
 \end{equation} 
For example, if $c$ is such that the function $t\mapsto t^{1/2}\pr{\ln\pr{c/t}}^{\beta}$ is increasing, then 
the latter constraint forces $\beta>i/2$.

For $\rho\in\mathcal R_i$, we denote by $\Hca_\rho$ the Hölder space associated to the 
modulus of regularity $\rho$, that is, the set of function $x\colon [0,1]\to\R$ such that 
$\norm{x}_\rho:=\sup_{0\leq s<t\leq 1}\abs{x\pr{t}-x\pr{s}}/\rho\pr{t-s}+\abs{x\pr{0}}$ 
is finite. Instead of dealing with the convergence in $\Hca_\rho$, we will work with a subspace
which is more adapted to the study of convergence in distribution. Let 
\begin{equation}
\Hca_\rho^o:=\ens{x\colon [0,1]\to \R\mid \lim_{\delta\to 0}
\sup_{\substack{s,t\in [0,1]\\ 0<t-s<\delta  } }   
\frac{\abs{x\pr{t}-x\pr{s}}}{\rho\pr{t-s}}=0
}.
\end{equation}

The convergence of partial sum processes of the form \eqref{eq:def_processus_sommes_partielles_stationnaire} when $\pr{X_i}_{i\geq 1}$ is i.i.d. has 
been studied in \cite{MR2000642,MR2054586}. The convergence of $\pr{W_n\pr{\cdot}}_{n\geq 1}$
in $\Hca_\rho^o$ for $\rho\in\Rca_1$ holds if and only if 
\begin{equation}
\forall A>0, \lim_{t\to +\infty}t\PP\ens{\abs{X_1}
>At^{1/2}\rho\pr{1/t}
}=0.
\end{equation}
Generally, a strategy to prove such results is to establish the convergence on the finite dimensional 
distribution and prove tightness, which is usually the most difficult part. In Equation (1.3) in 
\cite{MR3615086}, a tightness criterion for partial sum processes of the form \eqref{eq:def_processus_sommes_partielles_stationnaire} with $\pr{X_j}_{j\geq 1}$ 
and $\rho$ of the form $t\mapsto t^\alpha$, $0<\alpha<1/2$. Its verification is done 
by using deviation inequalities, see for example \cite{MR3426520} or Section~3.3 in 
\cite{MR3583992}.

For the purpose of the study of the convergence of $\pr{\sigma_n\pr{\cdot}}_{n\geq 1}$ (defined 
by \eqref{eq:definition_processus_sommes_partielles}), we need to extend 
this criterion in two directions: to partial sum processes like in 
\eqref{eq:def_processus_sommes_partielles_stationnaire}
for which the sequence   $\pr{X_j}_{j\geq 1}$
is not necessarily stationnary and to the class of modulus of regularity $\Rca_1$.

\begin{Proposition}\label{prop:critere_de_tension}
Let $\pr{X_j}_{j\geq 1}$ be a sequence of random variables. Let $W_n$ be the partial sum process 
built on $ \pr{X_j}_{j\geq 1}$ defined by \eqref{eq:def_processus_sommes_partielles_stationnaire}. 
Let $\rho\in \Rca_i$ for some $i\geq 1$. Suppose that for all positive $\eps$, the following convergences 
hold:
\begin{equation}\label{eq:tightness_criterion}
\lim_{J\to +\infty}\limsup_{n\to +\infty}\sum_{j=J}^{[\log_2n]}\sum_{k=0}^{2^j-1}
   \PP\ens{\abs{S_{ [n\pr{k+1}2^{-j}    ]   }-S_{ [nk2^{-j}]   }  }   >a_n \eps\rho\pr{2^{-j}}   }=0 ;
\end{equation}
where 
$S_N:=\sum_{i=1}^NX_i$ and $\pr{ a_n}_{n\geq 1}$ is an increasing sequence diverging to infinity and
 such that 
$\sup_{n\geq 1}a_{2n}/a_n$ is finite. Then the partial sum process defined by \eqref{eq:def_processus_sommes_partielles_stationnaire} is tight in $\Hca_\rho^o$.
\end{Proposition}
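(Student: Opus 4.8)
The plan is to deduce tightness in $\Hca_\rho^o$ from one oscillation estimate. Recall the classical description of relatively compact subsets of $\Hca_\rho^o$ for $\rho\in\Rca_i$ (in the spirit of Ciesielski's isomorphism, as used in the references on Hölderian invariance principles cited above): with $\omega_\rho\pr{x,\delta}:=\sup_{0<t-s<\delta}\abs{x\pr t-x\pr s}/\rho\pr{t-s}$, a set $K\subseteq\Hca_\rho^o$ is relatively compact if and only if $\sup_{x\in K}\norm x_\infty<+\infty$ and $\lim_{\delta\to 0}\sup_{x\in K}\omega_\rho\pr{x,\delta}=0$. Moreover, for $x$ with $x\pr 0=0$ and any fixed $\delta_0\in(0,1)$ one has the elementary bound $\norm x_\infty\leq\pr{1+1/\delta_0}\rho\pr{\delta_0}\,\omega_\rho\pr{x,\delta_0}$ (cover $[0,t]$ by at most $\lceil 1/\delta_0\rceil$ intervals of length $\leq\delta_0$ and telescope), so the uniform bound is itself controlled by the oscillation. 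Since each $W_n$ is piecewise affine, hence lies in $\Hca_\rho^o$, and $W_n\pr 0=0$, a standard diagonal argument shows that $\pr{W_n}_{n\geq 1}$ is tight in $\Hca_\rho^o$ as soon as the following holds: for every $\eps>0$, $\ds\lim_{\delta\to 0}\limsup_{n\to\infty}\PP\ens{\omega_\rho\pr{W_n,\delta}>\eps}=0$.

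To prove this, fix $n$, set $L:=[\log_2 n]$ and introduce the nested grids $G_j:=\ens{[nk2^{-j}]\colon 0\leq k\leq 2^j}$, $0\leq j\leq L$ (nested because $[nk2^{-j}]=[n\pr{2k}2^{-j-1}]$; the mesh of $G_L$ is at most two). Because $W_n$ is affine on each $[m/n,\pr{m+1}/n]$, for $0<s<t$ the value $W_n\pr t-W_n\pr s$ is a convex combination of the four differences $a_n^{-1}\pr{S_p-S_q}$ with $p\in\ens{[nt],[nt]+1}$, $q\in\ens{[ns],[ns]+1}$, all indices differing by at most $n\pr{t-s}+2$; a dyadic chaining along the $G_j$ — together with a treatment of the unit scale $1/n$, on which $W_n$ is affine — then yields, for $2^{-J-1}\leq\delta<2^{-J}$ and $n$ large enough,
\[
\omega_\rho\pr{W_n,\delta}\;\leq\;\frac{C}{a_n}\,\sup_{J-1\leq j\leq L}\;\frac1{\rho\pr{2^{-j}}}\,\max_{0\leq k<2^j}\,\abs{S_{[n\pr{k+1}2^{-j}]}-S_{[nk2^{-j}]}},
\]
with $C$ depending only on $\rho$; here one uses that $\rho$ is increasing and regularly varying of index $\alpha\in(0,1/2]$, so that $\sum_{l\geq J}\rho\pr{2^{-l}}\leq C_\rho\rho\pr{2^{-J}}$, that $\rho\pr{2^{-l-1}}\geq c_\rho\rho\pr{2^{-l}}$, and that $t\mapsto t/\rho\pr t$ is regularly varying of positive index $1-\alpha$, hence essentially increasing and vanishing at $0$. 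A union bound over $j$ and $k$ gives
\[
\PP\ens{\omega_\rho\pr{W_n,\delta}>\eps}\;\leq\;\sum_{j=J-1}^{L}\sum_{k=0}^{2^j-1}\PP\ens{\abs{S_{[n\pr{k+1}2^{-j}]}-S_{[nk2^{-j}]}}>a_n\tfrac\eps C\,\rho\pr{2^{-j}}},
\]
and letting $n\to\infty$ and then $\delta\to 0$ (that is, $J\to\infty$) makes the right-hand side vanish, by \eqref{eq:tightness_criterion} applied with $\eps/C$ in place of $\eps$. Combined with the reduction of the first paragraph, this yields tightness of $\pr{W_n}_{n\geq 1}$ in $\Hca_\rho^o$.

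I expect the main obstacle to be the first displayed inequality. One has to split the scales $2^{-j}$ into the three regimes $j<J$ (where the grid increments are dominated, by telescoping, by those at scale $2^{-J}$), $J\leq j\leq[\log_2 n]$ (the range appearing in \eqref{eq:tightness_criterion}), and $j>[\log_2 n]$ (where $W_n$ is affine, so its oscillation is governed by the single unit scale $1/n$), and to control the piecewise-linear boundary corrections coming from $X_{[ns]+1}$ and $X_{[nt]+1}$; the most delicate part is matching the unit scale $1/n$, which is not a dyadic scale of the grids $G_j$, with the finest scale $2^{-[\log_2 n]}$ of \eqref{eq:tightness_criterion}, while keeping every constant uniform in $n$ — this is precisely where the regular variation of $\rho$ (equivalently, the slow variation of $L$) is used repeatedly.
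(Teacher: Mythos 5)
Your reduction of tightness in $\Hca_\rho^o$ to the oscillation condition $\lim_{\delta\to 0}\limsup_n\PP\ens{\omega_\rho\pr{W_n,\delta}>\eps}=0$ is sound and essentially equivalent to the Schauder-coefficient criterion the paper invokes, so the whole weight of your proof rests on the displayed chaining inequality — and that inequality is false as stated. The problem is the unit scale. For $s,t$ in a single interval $\left[\pr{m-1}/n,m/n\right]$ one has $\abs{W_n\pr t-W_n\pr s}/\rho\pr{t-s}=n\pr{t-s}\abs{X_m}/\pr{a_n\rho\pr{t-s}}$, and since $t\mapsto t/\rho\pr t$ is essentially increasing, taking $t-s=1/n<\delta$ shows that any upper bound for $\omega_\rho\pr{W_n,\delta}$ must dominate $\max_m\abs{X_m}/\pr{a_n\rho\pr{1/n}}$. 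The right-hand side of your inequality need not: when $n$ is not a power of $2$, the finest grid $G_L$, $L=[\log_2 n]$, has at least one step of length $2$, say from $g$ to $g+2$. Put $X_{g+1}=K$, $X_{g+2}=-K$ and all other increments equal to $0$. Since $g+1$ belongs to no $G_j$, $j\leq L$ (the grids being nested), every grid increment $S_{[n\pr{k+1}2^{-j}]}-S_{[nk2^{-j}]}$ contains either both of $X_{g+1},X_{g+2}$ or neither, hence vanishes; yet $\omega_\rho\pr{W_n,\delta}\geq K/\pr{a_n\rho\pr{1/n}}$ is arbitrarily large. So the "boundary corrections" you mention cannot be absorbed into the grid increments at scales $J-1\leq j\leq [\log_2 n]$, for arbitrarily large $n$; this is a genuine gap, not a technicality about constants.

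The paper deals with exactly this point by keeping an extra term in the chaining bound, namely $R_n:=\max_{1\leq k\leq n}\abs{W_n\pr{k/n}-W_n\pr{\pr{k-1}/n}}=\max_k\abs{X_k}/a_n$, so that the increment bound reads (up to constants) grid increments plus $R_n\max_{J\leq j\leq[\log_2 n]}\rho\pr{2^{-j}}^{-1}\approx R_n/\rho\pr{1/n}$. Crucially, the convergence in probability of $R_n/\rho\pr{1/n}$ to $0$ is \emph{not} a consequence of \eqref{eq:tightness_criterion} at the same $n$: it is obtained by applying \eqref{eq:tightness_criterion} along the subsequence $n=2^N$ with $j=N$, where the integer grid coincides with the finest dyadic grid so that single steps \emph{are} grid increments, and then transferring to general $n$ using that $\pr{a_n}$ is increasing with $\sup_n a_{2n}/a_n<+\infty$ together with the regular variation of $\rho$. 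Your proposal never uses these hypotheses on $\pr{a_n}$, which is a symptom of the missing step. To repair your argument, add the $R_n/\rho\pr{1/n}$ term to your chaining inequality (this also covers the scales $j>[\log_2 n]$) and supply this separate subsequence argument for $R_n$.
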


Since the previous tightness criterion involves the tails of differences of partial sums, we have to 
establish a corresponding deviation inequality.

\begin{Proposition}\label{prop:deviation_accroissements}
Let  $r\geq 1$ be an integer, $\pr{S,\Sca}$ be measurable space, $h\colon 
S^r\to \R$ be a measurable function (with $S^r$ induced with the product 
$\sigma$-algebra) and let $\pr{X_i}_{i\geq 1}$ be an i.i.d. sequence of 
$S$-valued random variables. Suppose that \eqref{eq:condition_de_degeneration} 
holds for all $s_1,\dots,s_{r-1}\in S$ and for all $q\in \ens{1,\dots,r}$. Then 
the following inequality holds for all positive $x$ and all $y$ such that 
$x/y>3^r$ and all $n_2>n_1\geq r$:
\begin{multline}\label{eq:inegalite_exp_cas_degenere_differences}
\PP\ens{\frac 1{\sqrt{n_2-n_1}n_2^{\frac{r-1}2}}\abs{U_{n_2}-U_{n_1}}>x}
\leq A_{r}\exp\pr{-  \frac 12\pr{\frac{x}y}^{ \frac{2}{r    }}}
\\+B_{r}\int_{1 }^{+\infty}
\PP\ens{\abs{h\pr{X_1,\dots,X_r}}> y u ^{1/2} C_{r}   } 
\pr{1+\ln\pr{ u}}^{q_{r}}\mathrm du,
\end{multline}
where $A_r$, $B_r$ and $C_r$ depend only on $r$ and 
$q_r=r\pr{r-1}/2$.
\end{Proposition}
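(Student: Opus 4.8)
The plan is to deduce Proposition~\ref{prop:deviation_accroissements} from Theorem~\ref{thm:inegalite_deviation_U_stats} by applying the latter to a carefully chosen auxiliary $U$-statistic. The key observation is that the difference $U_{n_2}-U_{n_1}$ of $U$-statistics of order $r$ over the index sets $I_{n_2}^r$ and $I_{n_1}^r$ can be split according to how many indices fall in $\ens{1,\dots,n_1}$ versus in $\ens{n_1+1,\dots,n_2}$. Writing $m:=n_2-n_1$, we have
\begin{equation*}
U_{n_2}-U_{n_1}=\sum_{\ell=1}^{r}\sum_{\substack{i\in I_{n_1}^{r-\ell}\\ j\in I_{n_1+1,n_2}^{\ell}}}
h\pr{X_{i_1},\dots,X_{i_{r-\ell}},X_{j_1},\dots,X_{j_\ell}},
\end{equation*}
where $I_{n_1+1,n_2}^{\ell}$ denotes strictly increasing $\ell$-tuples in $\ens{n_1+1,\dots,n_2}$. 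The term $\ell=0$ is exactly $U_{n_1}$ and cancels. Because the kernel is degenerated in the sense of \eqref{eq:condition_de_degeneration}, each summand on the right is, after fixing the "old" coordinates, a centered object; more precisely, the block indexed by a fixed $\ell$ is itself (conditionally) a degenerate $U$-statistic of order $\ell$ in the fresh variables $X_{n_1+1},\dots,X_{n_2}$, with a kernel obtained from $h$ by integrating or inserting the first $r-\ell$ arguments.

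The first step is therefore to reduce to the dominant block $\ell=r$, i.e.\ to $\sum_{j\in I_{n_1+1,n_2}^{r}}h\pr{X_{j_1},\dots,X_{j_r}}$, which is a genuine degenerate $U$-statistic of order $r$ built on the $m=n_2-n_1$ i.i.d.\ variables $X_{n_1+1},\dots,X_{n_2}$; for this block Theorem~\ref{thm:inegalite_deviation_U_stats} applies directly with $N$ replaced by $m$ and the normalization $m^{r/2}$. The remaining blocks $\ell<r$ must be shown to contribute something smaller. For a fixed $\ell$, conditionally on $\Fca_{n_1}$ the inner sum over $j$ is a degenerate $U$-statistic of order $\ell$ in $m$ variables, so by the same theorem its tail at scale $m^{\ell/2}$ times a constant is controlled; summing over the $\binom{n_1}{r-\ell}$ choices of the old indices and over the at most $r$ values of $\ell$, and using $m^{\ell/2}\le m^{1/2}n_2^{(\ell-1)/2}\le m^{1/2}n_2^{(r-1)/2}$ together with $\binom{n_1}{r-\ell}\le n_2^{r-\ell}$, one checks that the normalization $\sqrt{m}\,n_2^{(r-1)/2}$ is large enough to absorb every block into a bound of the same shape as \eqref{eq:inegalite_exp_cas_degenere}. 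A slight subtlety: the kernels of the lower blocks are partial integrals $\PP_{X_1}^{r-\ell}$-type contractions of $h$, so their tails are dominated by those of $h\pr{X_1,\dots,X_r}$ via Jensen, which is why the second term of \eqref{eq:inegalite_exp_cas_degenere_differences} still features $h$ itself and not the contractions.

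The main obstacle I anticipate is bookkeeping: tracking the constants so that the final $A_r$, $B_r$, $C_r$ can genuinely be taken to depend only on $r$ after summing $O(r)$ blocks each weighted by polynomially many old-index choices, and making sure the condition $x/y>3^r$ is preserved (one may need to pass to $x/y>3^r$ for each block, which is automatic, but the absorption of the polynomial factors $\binom{n_1}{r-\ell}$ into the exponential requires that $n_2^{(r-1)/2}$ in the denominator exactly compensates, leaving no residual $N$-dependence). A clean way to handle this is to apply Theorem~\ref{thm:inegalite_deviation_U_stats} to the single $U$-statistic of order $r$ on $\ens{1,\dots,n_2}$ obtained by zeroing out the kernel whenever all indices lie in $\ens{1,\dots,n_1}$ — i.e.\ to $\widetilde h:=h$ on tuples meeting $\ens{n_1+1,\dots,n_2}$ — but since the theorem as stated requires i.i.d.\ data and a fixed kernel, the block decomposition above is the more honest route. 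I would carry it out in the order: (i) establish the block decomposition and its conditional-degeneracy property; (ii) apply Theorem~\ref{thm:inegalite_deviation_U_stats} blockwise; (iii) recombine, bounding the contraction tails by the tail of $h$ and absorbing the combinatorial factors into redefined $r$-dependent constants; (iv) verify the normalization $\sqrt{n_2-n_1}\,n_2^{(r-1)/2}$ indeed dominates $m^{\ell/2}$ uniformly in $1\le \ell\le r$ and $r\le n_1<n_2$.
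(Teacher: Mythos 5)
There is a genuine gap in the way you treat the blocks with $\ell<r$. The decomposition of $U_{n_2}-U_{n_1}$ according to how many indices exceed $n_1$ is correct, and the block $\ell=r$ is indeed a degenerate $U$-statistic of order $r$ on the fresh variables to which Theorem~\ref{thm:inegalite_deviation_U_stats} applies. But for $\ell<r$ you propose to apply the theorem to each fixed choice of old indices and then to ``sum over the $\binom{n_1}{r-\ell}$ choices''. This cannot work with constants depending only on $r$: a union bound multiplies the \emph{probability} bound by $\binom{n_1}{r-\ell}\approx n_2^{r-\ell}$, a factor growing with $n_2$ that cannot be absorbed into $A_r,B_r$, while splitting the threshold among the pieces gives each piece a share of order $x\sqrt{n_2-n_1}\,n_2^{\pr{r-1}/2}/\bigl(\binom{n_1}{r-\ell}m^{\ell/2}\bigr)$, which for $\ell<r$ is \emph{smaller} than the natural scale $m^{\ell/2}$ of that piece (e.g.\ $r=2$, $\ell=1$: per-piece normalized threshold $\asymp x/\sqrt{n_2}\to 0$), so the piecewise exponential bounds are vacuous and their sum diverges. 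The point is that the variance of the whole block is $\asymp\binom{n_1}{r-\ell}\binom{m}{\ell}$, i.e.\ the cancellation \emph{across} old index tuples is essential and is destroyed by any piecewise estimate; to keep it you would have to view the old-index sum as a random kernel, apply the deviation inequality conditionally in the new variables, and then control the tail of that random kernel by a further application of the inequality at order $r-\ell$ together with an integral-rearrangement step in the spirit of Lemma~\ref{lem:integrale_double_avec_log} — none of which is in your sketch. Relatedly, the claim that the lower-block kernels are ``$\PP_{X_1}^{r-\ell}$-type contractions of $h$'' whose tails are dominated by those of $h$ ``via Jensen'' is not correct for your decomposition: these kernels are sums over old tuples of values of $h$, not conditional expectations, and Jensen gives no control of them by $h\pr{X_1,\dots,X_r}$ alone.

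For comparison, the paper does not split according to the number of new indices at all. It groups the terms of $U_{n_2}-U_{n_1}$ by the largest index $j\in\ens{n_1+1,\dots,n_2}$, writing $U_{n_2}-U_{n_1}=n_2^{\pr{r-1}/2}\sum_{j=n_1+1}^{n_2}D_j$ with $D_j=n_2^{-\pr{r-1}/2}\sum_{1\leq i_1<\dots<i_{r-1}<j}h\pr{X_{i_1},\dots,X_{i_{r-1}},X_j}$; by \eqref{eq:condition_de_degeneration} this is a martingale difference sequence with exactly $n_2-n_1$ terms, each dominated in the convex order by $Y^2$, where $Y$ is the maximum over $k\leq n_2$ of the order-$\pr{r-1}$ $U$-statistic with an independent copy $X$ in the last slot. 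Proposition~\ref{prop:inegalite_deviation_martingales_dominee} applied with $n=n_2-n_1$ then produces precisely the normalization $\sqrt{n_2-n_1}\,n_2^{\pr{r-1}/2}$, and one concludes exactly as in the induction step of Theorem~\ref{thm:inegalite_deviation_U_stats}: condition on $X$, apply the order-$\pr{r-1}$ inequality, and rearrange the double integral. If you want to salvage your route you must reproduce this nested use of the inequality inside each block; as written, the combinatorial summation over old indices is a step that fails.
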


By combining this tightness criterion with the obtained deviation inequalities, we get the 
following function central limit theorem. 

\begin{Theorem}\label{thm:PI_Holderien}
Let  $r\geq 1$ be an integer,  $\pr{S,\Sca}$ be measurable space, $h\colon 
S^r\to \R$ be a symmetric measurable function (with $S^r$ induced with the product 
$\sigma$-algebra) and let $\pr{X_i}_{i\geq 1}$ be an i.i.d. sequence of 
$S$-valued random variables. Assume that $h$ is degenerated of order 
$i-1$ for some $i\in\ens{2,\dots,r}$ and that $h$ is symmetric. Let $\rho\in\Rca_r$. Assume that 
\begin{equation}\label{eq:condition_suffisante_WIP}
\forall c>0, \sum_{j=1}^{+\infty}
\int_1^{+\infty}\PP\ens{\abs{h\pr{X_1,\dots,X_r}}> c2^{j/2}\rho\pr{2^{-j}}j^{-r/2} u^{1/2}}
 \pr{1+\log u}^{q_r}du<+\infty.
\end{equation}

 Then 
\begin{equation}\label{eq:WIP_Ustats}
\frac 1{n^{i/2}}\pr{U_{[nt]}-\pr{nt-[nt]} \pr{U_{[nt]+1} -U_{[nt]}  }  }\to 
I_i\pr{h_i}\pr{t}
\end{equation}
in distribution in $\Hca_\rho^o\pr{[0,1]}$.

In particular, denoting $Y:= \abs{h\pr{X_1,\dots,X_r}}$, when $\rho\pr{t}=t^\alpha$, $0<\alpha<1/2$,  the condition 
\begin{equation}\label{eq:cond_suff_WIP_talpha}
\E{ Y^{\frac{1}{1/2-\alpha}} \pr{\log Y}^{r/2}   }<+\infty
\end{equation}
guarantees \eqref{eq:WIP_Ustats}.
When $\rho\pr{t}=t^{1/2}\pr{\log \pr{C/t}}^{\beta}$, $\beta>r/2$, the condition 
\begin{equation}\label{eq:cond_suff_WIP_t12}
\forall A>0, \E{\exp\pr{A  Y^{\frac 1{\beta-r/2}}  } }<+\infty
\end{equation}
guarantees \eqref{eq:WIP_Ustats}.

\end{Theorem}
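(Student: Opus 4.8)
The plan is the classical one — convergence of the finite-dimensional distributions together with tightness — now carried out in the Hölder space $\Hca_\rho^o$. Throughout, $a_n=n^{r-i/2}$ denotes the normalising sequence appearing in \eqref{eq:WIP_Ustats}; it is increasing, tends to infinity and satisfies $\sup_n a_{2n}/a_n=2^{r-i/2}<\infty$. Since $\Hca_\rho^o\subseteq C[0,1]\subseteq D[0,1]$, the convergence of the finite-dimensional distributions of $a_n^{-1}U_{[n\cdot]}$ to those of $I_i(h_i)$ is exactly Corollary~1 of \cite{MR740907}; the piecewise-affine interpolation in \eqref{eq:WIP_Ustats} differs from $a_n^{-1}U_{[n\cdot]}$, in each coordinate, by at most $a_n^{-1}\max_{r\leq k<n}\abs{U_{k+1}-U_k}$, which is $o_\PP(1)$ by Theorem~\ref{thm:inegalite_deviation_U_stats} applied to each summand of the Hoeffding decomposition \eqref{eq:decomposition_de_Hoeffding_deg_ordre_i_1}, so the interpolated process has the same finite-dimensional limits. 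That $I_i(h_i)$ admits a modification with sample paths in $\Hca_\rho^o$ for $\rho\in\Rca_r$ follows, for $i=2$, from the representation $\sum_j\lambda_j(B_j^2(\cdot)-\cdot)$ recalled in Section~\ref{subsec:WIP_Holder} together with $\rho\in\Rca_r\subseteq\Rca_2$, and in general from the Hölder regularity of multiple Wiener integrals against a modulus in $\Rca_i$. It remains to prove tightness of \eqref{eq:WIP_Ustats} in $\Hca_\rho^o$.

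\emph{Reduction to the tightness criterion.} I realise the process in \eqref{eq:WIP_Ustats} as a partial sum process of the form \eqref{eq:def_processus_sommes_partielles_stationnaire}: putting $X_\ell:=U_\ell-U_{\ell-1}$ for $\ell>r$, $X_r:=U_r$ and $X_\ell:=0$ for $\ell<r$, one has $S_N=\sum_{\ell\leq N}X_\ell=U_N$ for $N\geq r$ and the process coincides with $W_n$. Since $\rho\in\Rca_r$, Proposition~\ref{prop:critere_de_tension} reduces the claim to proving that, for every $\eps>0$,
\[ \lim_{J\to\infty}\ \limsup_{n\to\infty}\ \sum_{j=J}^{[\log_2 n]}\ \sum_{k=0}^{2^j-1} \PP\ens{\,\abs{U_{[n(k+1)2^{-j}]}-U_{[nk2^{-j}]}}>a_n\,\eps\,\rho(2^{-j})\,}=0 . \]

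\emph{The increment estimate and the summation.} I first extend Proposition~\ref{prop:deviation_accroissements}, stated for a completely degenerate kernel, to a kernel merely degenerate of order $i-1$, in the way Corollary~\ref{cor:extension_inegalite_de_deviation} is obtained from Theorem~\ref{thm:inegalite_deviation_U_stats}: by \eqref{eq:decomposition_de_Hoeffding_deg_ordre_i_1}, $U_{n_2}-U_{n_1}$ is a linear combination — with binomial coefficients depending on $n_1,n_2$ — of the completely degenerate increments $U_{p,n_2}(h_p)-U_{p,n_1}(h_p)$ and of remainder terms of the form $U_{p,n_1}(h_p)$, $p=i,\dots,r$, the former controlled by Proposition~\ref{prop:deviation_accroissements} at order $p$, the latter by Theorem~\ref{thm:inegalite_deviation_U_stats}. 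For a block with $n_1=[nk2^{-j}]$, $n_2=[n(k+1)2^{-j}]$ and $k\geq1$ one has $n_2-n_1\asymp n2^{-j}$, $n_2\asymp n$; after dividing by $a_n$ the leading ($p=i$) Hoeffding increment sits on the scale $2^{-j/2}$ and enters Proposition~\ref{prop:deviation_accroissements} with an effective deviation level $x\asymp\eps\,2^{j/2}\rho(2^{-j})$ that is independent of $n$, whereas the levels $p>i$, the binomial remainder terms, and the boundary block $k=0$ (treated via Corollary~\ref{cor:extension_inegalite_de_deviation} on $\{1,\dots,[n2^{-j}]\}$) each produce a strictly larger effective level and are swallowed by $\limsup_n$. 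Taking $y=y_j:=c\,\eps\,2^{j/2}\rho(2^{-j})\,j^{-r/2}$ with $c=c(r)$ small enough that $x/y\asymp j^{r/2}$ both exceeds $3^r$ for $j\geq J$ (with $J$ large) and makes $\tfrac12(x/y)^{2/i}\geq(1+\eta)j\log2$, the extended inequality bounds each block probability, for $j\geq J$, by
\[ A_r\,2^{-(1+\eta)j}+B_r\sum_{p=i}^r\int_1^\infty\PP\ens{\abs{h_p(X_1,\dots,X_p)}>y_j\,C_r\,u^{1/2}}(1+\log u)^{q_p}\,\mathrm du . \]
Multiplying by the $2^j$ blocks and summing over $j\geq J$: the exponential contribution is $A_r\sum_{j\geq J}2^{-\eta j}\to0$, and for the tail contribution a standard bound for the tails of the Hoeffding projections $h_p$ in terms of the tail of $\abs{h(X_1,\dots,X_r)}$ (and $q_p\leq q_r$), followed by the substitution $u\mapsto 2^j u$ absorbing the accumulated factor $2^j$, identifies it with the tail over $j\geq J$ of the convergent series \eqref{eq:condition_suffisante_WIP}, hence it tends to $0$. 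This yields the criterion, and together with the first paragraph, \eqref{eq:WIP_Ustats}.

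\emph{The explicit conditions, and the main obstacle.} For $\rho(t)=t^\alpha$, $0<\alpha<1/2$, one has $2^{j/2}\rho(2^{-j})j^{-r/2}=2^{j(1/2-\alpha)}j^{-r/2}$, of geometric growth in $j$; for $\rho(t)=t^{1/2}(\log(C/t))^\beta$, $\beta>r/2$, one has $2^{j/2}\rho(2^{-j})j^{-r/2}\asymp j^{\beta-r/2}$, of merely polynomial growth. In each case Fubini turns \eqref{eq:condition_suffisante_WIP} into a moment integral of $Y=\abs{h(X_1,\dots,X_r)}$ that a routine dyadic summation shows to be finite under \eqref{eq:cond_suff_WIP_talpha}, respectively \eqref{eq:cond_suff_WIP_t12} — the polynomial rather than geometric growth in the second case being exactly what forces a finite-exponential-moment hypothesis. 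I expect the genuine difficulty to lie in the third paragraph: constructing the increment inequality in the merely degenerate case while tracking the $n_1,n_2$-dependence of the Hoeffding coefficients, verifying that after normalisation only the level $p=i$ contributes uniformly in $n$, and calibrating $y_j$ so that the combinatorial factor $2^j$ is absorbed simultaneously in the exponential term (summability against $\sum 2^j$) and in the tail term (reduction to the precise form of \eqref{eq:condition_suffisante_WIP}). The path-regularity statement for $I_i(h_i)$, though secondary, is not automatic either.
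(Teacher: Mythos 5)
Your overall route is the paper's: finite-dimensional convergence is taken from Corollary~1 of \cite{MR740907} plus negligibility of the interpolation term, and tightness is obtained from Proposition~\ref{prop:critere_de_tension} together with the increment inequality of Proposition~\ref{prop:deviation_accroissements}, with the same calibration $y_j\asymp\eps\,2^{j/2}\rho\pr{2^{-j}}j^{-r/2}$ so that the exponential term beats the $2^j$ blocks at level $j$. The one structural difference is the order of operations: the paper applies the Hoeffding decomposition \eqref{eq:decomposition_de_Hoeffding_deg_ordre_i_1} first, at the level of the processes, so that tightness is only needed for completely degenerate kernels (Lemma~\ref{lem:tension_termes_de_la_decomposition_de_Hoeffding}) and Proposition~\ref{prop:deviation_accroissements} is used exactly as stated; you apply the criterion once to the whole process and push the decomposition inside the increments, which forces you to extend Proposition~\ref{prop:deviation_accroissements} to merely degenerate kernels and to track the $n_1,n_2$-dependent binomial coefficients. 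That is feasible (your identification of the effective level, equal to $\eps 2^{j/2}\rho\pr{2^{-j}}$ at $p=i$ and increasing in $p$, is correct), but dismissing the coefficient-difference terms as ``swallowed by $\limsup_n$'' is not an argument: the bounds you get for them are uniform in $n$ while the number of probabilities summed grows like $\sum_{j\leq\log_2 n}2^j$, so these terms must be given bounds summable over the $2^j$ blocks and over $j\geq J$, exactly as the leading term. (Your silent replacement of the normalization $n^{i/2}$ of \eqref{eq:WIP_Ustats} by $n^{r-i/2}$ agrees with what the paper's proof actually uses, so no complaint there.)

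The step that fails as written is the absorption of the block count. The substitution $u\mapsto 2^ju$ gives
\begin{equation*}
2^j\int_1^{+\infty}\PP\ens{Y>y_jC_ru^{1/2}}\pr{1+\log u}^{q_r}\mathrm du
=\int_{2^j}^{+\infty}\PP\ens{Y>y_jC_r2^{-j/2}w^{1/2}}\pr{1+\log\pr{2^{-j}w}}^{q_r}\mathrm dw,
\end{equation*}
whose threshold is of the form $c\,\rho\pr{2^{-j}}j^{-r/2}w^{1/2}$ and not the $c\,2^{j/2}\rho\pr{2^{-j}}j^{-r/2}u^{1/2}$ appearing in \eqref{eq:condition_suffisante_WIP}; a change of variables cannot make the factor $2^j$ disappear. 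What your argument really requires (and what the paper's proof requires too, since its per-block bound is $k$-independent and is summed over the $2^j$ values of $k$) is the weighted series $\sum_j2^j\int_1^{+\infty}\PP\ens{Y>c2^{j/2}\rho\pr{2^{-j}}j^{-r/2}u^{1/2}}\pr{1+\log u}^{q_r}\mathrm du<+\infty$, which is strictly stronger than \eqref{eq:condition_suffisante_WIP} as printed. This is exactly where the paper is loosest: its own verification of \eqref{eq:cond_suff_WIP_talpha} goes through the weighted bound $\sum_j2^j\PP\ens{Z>2^{ja}\pr{1+j}^{-b}}\leq C_{a,b}\E{\cdot}$, i.e.\ the hypothesis the argument actually runs on carries the factor $2^j$. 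So you should either work with the weighted condition from the start (and check \eqref{eq:cond_suff_WIP_talpha} and \eqref{eq:cond_suff_WIP_t12} against it, which is in substance what the paper does), or supply a genuine reduction of the weighted series to \eqref{eq:condition_suffisante_WIP}; the substitution you propose does neither.
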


\begin{Remark}
When $r=1$ and $\rho\pr{t}=t^\alpha$, we do not recover exactly the 
necessary and sufficient condition established in \cite{MR2000642}, which reads $
\lim_{t\to +\infty} t^{1/\pr{1/2-\alpha}}\PP\ens{Y>t}=0$. However, when 
$\rho\pr{t}=t^{1/2}\pr{\log \pr{C/t}}^{\beta}$, we recover the same condition.
\end{Remark}

 \section{Proofs}

 \subsection{Proof of Theorem~\ref{thm:inegalite_deviation_U_stats}}
Let us first give the general idea of proof of Theorem~\ref{thm:inegalite_deviation_U_stats}.
\begin{enumerate}
\item We will proceed by an induction argument on $r$. We will use
martingale inequality, which helps to control the tail maximum of 
partial sums of a martigale differences sequences with the help of the 
tail of the sum of squares and the sum of conditional variances. 
\item Using the notion of convex ordering that will be made explicit later, 
 the sum of squares and the sum of conditional variances satisfy 
a "convexity domination" hence we will be reduced to tails of $U$-statistics 
of the previous order. 
\item In order to perform the induction step, we will also need a rearragement 
of integrals of the type $\int_1^{+\infty}\int_1^{+\infty}
\PP\ens{Y>u/\pr{1+\ln u}^av}du\pr{1+\ln v}^b  dv$, whose treatment will be addressed later.
\end{enumerate} 
 
\subsubsection{Martingale inequality} 
 
We will formulate the martingale inequality we need to handle the 
tail of $U$-statistics. First, the following inequality is 
established  in \cite{MR3311214}.

\begin{Theorem}[Theorem~2.1 in \cite{MR3311214}]\label{thm:Fan_Grama_Liu}
Let $\pr{D_j}_{j\geq 1}$ be a martingale differences sequence 
with respect to the filtration $\pr{\Fca_i}_{i\geq 0}$. Suppose that 
there exists random variables $V_{i-1}$, $1\leq i\leq n$, which are non-negative, 
$\Fca_{i-1}$-measurable, non-negative functions $f$ and $g$ such that for some 
positive $\lambda$ and for all $i\in\ens{1,\dots,n}$,
\begin{equation}\label{eq:condition_sur_les_accroissements}
\E{\exp\pr{\lambda D_i-g\pr{\lambda}D_i^2}\mid \Fca_{i-1}}\leq 1+f\pr{\lambda}
V_{i-1}.
\end{equation}
Then for all $x$, $v$, $w>0$, 
\begin{multline}\label{eq:inegalite_Fan_grama_Liu}
\PP\pr{\bigcup_{k=1}^n \ens{\sum_{i=1}^kD_i\geq x}\cap 
\ens{\sum_{i=1}^kD_i^2\leq v^2   }\cap\ens{\sum_{i=1}^kV_{i-1}\leq w   }}\\
\leq \exp\pr{-\lambda x+g\pr{\lambda}v^2+f\pr{\lambda}w}.
\end{multline}
\end{Theorem}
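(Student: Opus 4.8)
The statement is a clean instance of the exponential-supermartingale method, so my plan is to manufacture an explicit non-negative supermartingale from hypothesis \eqref{eq:condition_sur_les_accroissements} and then invoke Doob's maximal inequality. First I would fix the $\lmb>0$ for which \eqref{eq:condition_sur_les_accroissements} holds, set $M_0:=1$, and for $1\leq k\leq n$ define
\begin{equation*}
M_k:=\exp\pr{\lmb\som{i=1}{k}D_i-g\pr{\lmb}\som{i=1}{k}D_i^2}\prod_{i=1}^{k}\pr{1+f\pr{\lmb}V_{i-1}}^{-1}.
\end{equation*}
Every factor is strictly positive because $f,g$ are non-negative and $V_{i-1}\geq 0$, and the correction product $\prod_{i=1}^{k}\pr{1+f\pr{\lmb}V_{i-1}}^{-1}$ is $\Fca_{k-1}$-measurable since each $V_{i-1}$ is. Checking integrability by an immediate induction and then conditioning on $\Fca_{k-1}$, \eqref{eq:condition_sur_les_accroissements} gives
\begin{equation*}
\E{M_k\mid\Fca_{k-1}}=M_{k-1}\,\frac{\E{\exp\pr{\lmb D_k-g\pr{\lmb}D_k^2}\mid\Fca_{k-1}}}{1+f\pr{\lmb}V_{k-1}}\leq M_{k-1},
\end{equation*}
so $\pr{M_k}_{0\leq k\leq n}$ is a non-negative supermartingale with $\E{M_0}=1$.

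Next I would bound $M_k$ from below on the $k$-th event $E_k:=\ens{\som{i=1}{k}D_i\geq x}\cap\ens{\som{i=1}{k}D_i^2\leq v^2}\cap\ens{\som{i=1}{k}V_{i-1}\leq w}$ appearing in the union. On $E_k$, using $\lmb>0$, $g\pr{\lmb}\geq 0$, the elementary inequality $\pr{1+t}^{-1}\geq e^{-t}$ for $t\geq 0$, and $f\pr{\lmb}\geq 0$, $V_{i-1}\geq 0$, one obtains
\begin{equation*}
M_k\geq \exp\pr{\lmb x-g\pr{\lmb}v^2}\exp\pr{-f\pr{\lmb}\som{i=1}{k}V_{i-1}}\geq\exp\pr{\lmb x-g\pr{\lmb}v^2-f\pr{\lmb}w}=:a.
\end{equation*}
Hence $\bigcup_{k=1}^{n}E_k\subseteq\ens{\max_{1\leq k\leq n}M_k\geq a}$, and Doob's maximal inequality for the non-negative supermartingale $\pr{M_k}$ gives $\PP\ens{\max_{1\leq k\leq n}M_k\geq a}\leq\E{M_0}/a=1/a$, which is precisely \eqref{eq:inegalite_Fan_grama_Liu}.

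The argument is routine, which is why the result is simply quoted from \cite{MR3311214}; the only points needing care are the $\Fca_{k-1}$-measurability of the correction product (this is exactly why the hypothesis asks $V_{i-1}$ to be $\Fca_{i-1}$-measurable) and keeping all three one-sided estimates on $E_k$ pulling in the same direction, which is what forces the sign requirements $\lmb>0$ and $f\pr{\lmb},g\pr{\lmb}\geq 0$. I do not expect any genuine analytic obstacle beyond that.
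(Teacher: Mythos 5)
Your proof is correct. The paper does not prove this statement at all --- it is imported verbatim from Fan, Grama and Liu \cite{MR3311214} --- and your argument (the exponential supermartingale $M_k$ with the predictable normalizing product $\prod_{i\leq k}\pr{1+f\pr{\lambda}V_{i-1}}^{-1}$, the lower bound $M_k\geq \exp\pr{\lambda x-g\pr{\lambda}v^2-f\pr{\lambda}w}$ on each event of the union, and Doob's maximal inequality for non-negative supermartingales) is the standard route and essentially the proof given in that reference, so there is nothing to add.
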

 
 Notice that the condition \eqref{eq:condition_sur_les_accroissements} is satisfied 
 for all $\lambda>0$ when $f\pr{\lambda}=g\pr{\lambda}=\lambda^2/2$ and 
 $V_{i-1}=\E{D_i^2\mid\Fca_{i-1}}$. After having optimized over 
 $\lambda$ the right hand side of \eqref{eq:inegalite_Fan_grama_Liu} 
 and applying Theorem~\ref{thm:Fan_Grama_Liu} to $D_j$ and then to 
  $-D_j$, we get that 
 \begin{equation}
 \PP\ens{\max_{1\leq k\leq n}\abs{\sum_{i=1}^kD_i}>x   }
 \leq 2\exp\pr{-\frac{x^2}{2v^2}}+\PP\ens{\sum_{i=1}^n
 \pr{D_i^2+\E{D_i^2\mid \Fca_{i-1}}}>v^2}
 \end{equation}
  for all $x$ and $v>0$.

\subsubsection{Convex ordering}

Let us compare the tails of $Y$ and 
$  \E{Y\mid\Gca}$. By Markov's inequality, 
\begin{equation*}
 \PP\ens{\E{Y\mid\Gca}>x}
\leq \frac 1x\E{ \E{Y\mid\Gca}\mathbf{1}\ens{\E{Y\mid\Gca}>x}  }
= \frac 1x\E{ Y\mathbf{1}\ens{\E{Y\mid\Gca}>x}  }
\end{equation*}
 and splitting the last expectation over the sets where $Y\leq x/2$ and 
 $Y>x/2$ gives 
 \begin{equation}
 \PP\ens{\E{Y\mid\Gca}>x}\leq\frac 12 \PP\ens{\E{Y\mid\Gca}>x}+
 \frac 1x\E{ Y\mathbf{1}\ens{Y>x/2}  }
 \end{equation}
 hence 
  \begin{equation}
x \PP\ens{\E{Y\mid\Gca}>x}\leq\ 
 \E{ 2Y\mathbf{1}\ens{2Y>x}  }
 \end{equation}
and writing the last expectation as an integral over $\pr{0,+\infty}$ 
involving the tail of $Y$, we get that 
   \begin{equation}
x \PP\ens{\E{Y\mid\Gca}>x}\leq\ 
 x\PP\ens{2Y>x}+\int_x^{+\infty}\PP\ens{2Y>u}\mathrm{d}u.
 \end{equation}
 Bounding further $ x\PP\ens{2Y>x}$ by 
 $2\int_{x/2}^x\PP\ens{2Y>u}\mathrm{d}u$ finally gives 
 after the substitution $v=u/\pr{2x}$ that 
 \begin{equation}
 \PP\ens{\E{Y\mid\Gca}>x}\leq\ 
 4\int_{1/4}^{+\infty}\PP\ens{ Y>xv}\mathrm{d}v.
 \end{equation}
 hence one can control the tails of $\E{Y\mid\Gca}$ by 
a functional of those of $Y$. 

Therefore, if $Y$ and $Z$ are two non-negative random variables 
such that 
\begin{equation}\label{eq:ordre_convexe_esperance}
Z\leq \E{Y\mid Z}\mbox{ a.s.}
\end{equation}
then for all $x>0$, 
 \begin{equation}\label{eq:inegalites_queues_generale}
 \PP\ens{Z>x}\leq
  \int_{1}^{+\infty}\PP\ens{ Y>xv/4}\mathrm{d}v.
 \end{equation}
 
Observe also that if $Z$ and $Y$ are two non-negative random variables 
such that \eqref{eq:ordre_convexe_esperance} holds, then for each convex 
non-decreasing function $\varphi\colon \R_+\to\R_+$, 
\begin{equation}\label{eq:ordre_convexe_definition}
\E{\varphi\pr{Z}}\leq \E{\varphi\pr{Y}}.
\end{equation}
When \eqref{eq:ordre_convexe_definition} holds for each convex non-decreasing 
function, we write $Z\leq_{\mathrm{conv}} Y$.

By Theorem~6 in \cite{MR606989}, part (c),  if two random variables 
$Y$ and $Z$ defined on a common probability space 
$\pr{\Omega,\Fca,\PP}$
satisfy \eqref{eq:ordre_convexe_definition}, then there exists 
a probability space $\pr{\Omega',\Fca',\PP'}$ and random variables 
$Y'$ and $Z'$ such that
\begin{itemize}
\item for all real number $t$, $\PP'\ens{Y'\leq t}=\PP\ens{Y\leq t}$ 
and $\PP'\ens{Z'\leq t}=\PP\ens{Z\leq t}$;
\item the inequality $Z'\leq \mathbb{E}_{\PP'}\left[Y'\mid Z'\right]$ holds almost surely. 
\end{itemize}

Combining this with \eqref{eq:inegalites_queues_generale} gives the 
following lemma that will be used in the sequel.

\begin{Lemma}\label{lem:ordre_convexe}
Let $Y$ and $Z$ be two non-negative random variables such that for all convex 
non-decreasing function $\varphi\colon \R_+\to\R_+$, 
$\E{\varphi\pr{Z}}\leq \E{\varphi\pr{Y}}$. Then for all positive $x$, 
 \begin{equation}\label{eq:inegalites_queues_ordre_convexe}
 \PP\ens{Z>x}\leq
 \int_{1}^{+\infty}\PP\ens{ Y>xv/4}\mathrm{d}v.
 \end{equation}
\end{Lemma}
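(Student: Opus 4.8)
The plan is to reduce the convex-order hypothesis to the conditional-expectation domination \eqref{eq:ordre_convexe_esperance} by means of a representation theorem, and then to invoke the elementary tail estimate already carried out just above the statement. Concretely, recall that the computation performed in the paragraph preceding \eqref{eq:inegalites_queues_generale} shows the following: if $Y$ and $Z$ are non-negative random variables on a common probability space with $Z\leq\E{Y\mid Z}$ almost surely, then $\PP\ens{Z>x}\leq\int_1^{+\infty}\PP\ens{Y>xv/4}\mathrm dv$ for every $x>0$. The argument there is, with $\Gca:=\sigma(Z)$, to bound $\PP\ens{Z>x}$ by $\PP\ens{\E{Y\mid\Gca}>x}$, apply Markov's inequality together with the tower property, split the resulting expectation according to whether $Y\leq x/2$ or $Y>x/2$ so as to absorb half of the left-hand side, rewrite the surviving expectation as an integral of the tail of $2Y$, enlarge it by the crude bound $x\PP\ens{2Y>x}\leq 2\int_{x/2}^x\PP\ens{2Y>u}\mathrm du$, and finally change variables. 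I would simply quote this; no new idea is needed for this step.

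The second and only substantial step is to remove the extra structure, since the hypothesis of the lemma is merely that $Z\conv Y$, with no pointwise link between $Y$ and $Z$. Here I would use Theorem~6(c) of \cite{MR606989}: since $\E{\varphi(Z)}\leq\E{\varphi(Y)}$ for every convex non-decreasing $\varphi\colon\R_+\to\R_+$, there exist a probability space $\pr{\Omega',\Fca',\PP'}$ and random variables $Y'$, $Z'$ on it with $\PP'\ens{Y'\leq t}=\PP\ens{Y\leq t}$ and $\PP'\ens{Z'\leq t}=\PP\ens{Z\leq t}$ for every real $t$, and such that $Z'\leq\mathbb{E}_{\PP'}\left[Y'\mid Z'\right]$ almost surely. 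Applying the first step on $\pr{\Omega',\Fca',\PP'}$ gives $\PP'\ens{Z'>x}\leq\int_1^{+\infty}\PP'\ens{Y'>xv/4}\mathrm dv$, and transferring through the equalities of one-dimensional laws turns this into $\PP\ens{Z>x}\leq\int_1^{+\infty}\PP\ens{Y>xv/4}\mathrm dv$, which is \eqref{eq:inegalites_queues_ordre_convexe}.

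The only ingredient that is not a routine manipulation is the appeal to the representation theorem of \cite{MR606989}, which is exactly what lets one pass from the abstract inequality $\E{\varphi(Z)}\leq\E{\varphi(Y)}$ to an almost-sure statement of the form \eqref{eq:ordre_convexe_esperance} on a possibly enlarged probability space; once that is available, the lemma is immediate. I would also take care to track the constant $4$ and the lower limit $1$ of the integral through the two ``doubling'' steps of the elementary estimate (the split over $\ens{Y\leq x/2}$ versus $\ens{Y>x/2}$, and the bound $x\PP\ens{2Y>x}\leq 2\int_{x/2}^x\PP\ens{2Y>u}\mathrm du$), since it is precisely the slack they introduce that produces the argument $xv/4$ inside the probability and allows the integral to start at $1$.
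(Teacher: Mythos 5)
Your proposal is correct and follows essentially the same route as the paper: the elementary tail estimate under the domination $Z\leq\E{Y\mid Z}$ (yielding \eqref{eq:inegalites_queues_generale} with the constant $4$ and lower limit $1$), combined with Theorem~6(c) of \cite{MR606989} to realize the convex ordering as such a conditional-expectation domination on an auxiliary probability space, then transferring back via equality of the marginal laws. No gaps; this is exactly how the lemma is obtained in the paper.
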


Using the previous lemma, one can control the tails of the maximum of a 
a martingale whose increments have common majorant 
for the order $\leq_{\mathrm{conv}} $.

\begin{Proposition}\label{prop:inegalite_deviation_martingales_dominee}
 Let $\pr{D_j}_{j\geq 1}$ be a martingale differences sequence 
with respect to the filtration $\pr{\Fca_i}_{i\geq 0}$.  Suppose that $\E{ D_i ^2}$ is finite for all 
$i\geq 1$. Suppose that there exists a random variable $Y$ such that 
for all $1\leq i\leq n$, $ D_i^2\leq_{\mathrm{conv}} Y^2$. 
 Then for all $x,y>0$ and each $n\geq 1$, the following 
inequality holds:
 \begin{equation}\label{eq:inegalite_max_martingales_moments_ordre_p_domination_sto}
 \PP\ens{\max_{1\leq k\leq n}\abs{\sum_{i=1}^kD_i}>x n^{1/2}  }
 \leq 2\exp\pr{-\frac 12\pr{\frac{x}{y}}^{2}  }\\ 
 + 2\int_{1 }^{+\infty}\PP\ens{ Y^2>y^2u/4}\mathrm{d}u.
 \end{equation}
\end{Proposition}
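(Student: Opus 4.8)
The plan is to assemble the two ingredients already at our disposal: the martingale maximal inequality
\[
\PP\ens{\max_{1\leq k\leq n}\abs{\sum_{i=1}^kD_i}>x}\leq 2\exp\pr{-\frac{x^2}{2v^2}}+\PP\ens{\sum_{i=1}^n\pr{D_i^2+\E{D_i^2\mid\Fca_{i-1}}}>v^2},\quad x,v>0,
\]
derived above from Theorem~\ref{thm:Fan_Grama_Liu}, and Lemma~\ref{lem:ordre_convexe}. I would first apply the displayed inequality with the threshold $x$ replaced by $xn^{1/2}$ and with the free parameter chosen as $v^2:=ny^2$. The exponential term then becomes exactly $2\exp\pr{-\frac12\pr{x/y}^2}$, which is the first summand of \eqref{eq:inegalite_max_martingales_moments_ordre_p_domination_sto}, and it remains to bound
\[
\PP\ens{\sum_{i=1}^n\pr{D_i^2+\E{D_i^2\mid\Fca_{i-1}}}>ny^2}=\PP\ens{S>y^2},\qquad S:=\frac1n\sum_{i=1}^n\pr{D_i^2+\E{D_i^2\mid\Fca_{i-1}}}.
\]

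The heart of the argument is to prove $S\leq_{\mathrm{conv}}2Y^2$, which is a matter of convex-order bookkeeping obtained from Jensen's inequality applied to an arbitrary convex non-decreasing $\varphi\colon\R_+\to\R_+$. I would record three facts. First, conditional expectation cannot increase $\leq_{\mathrm{conv}}$, so the hypothesis $D_i^2\leq_{\mathrm{conv}}Y^2$ forces $\E{D_i^2\mid\Fca_{i-1}}\leq_{\mathrm{conv}}D_i^2\leq_{\mathrm{conv}}Y^2$ (the assumption $\E{D_i^2}<\infty$ guarantees these conditional expectations are well defined). Second, combining $\varphi(a+b)\leq\frac12\varphi(2a)+\frac12\varphi(2b)$ with $2D_i^2\leq_{\mathrm{conv}}2Y^2$ and $2\E{D_i^2\mid\Fca_{i-1}}\leq_{\mathrm{conv}}2Y^2$ gives $D_i^2+\E{D_i^2\mid\Fca_{i-1}}\leq_{\mathrm{conv}}2Y^2$ for each $i$. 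Third, a uniform average of random variables each dominated for $\leq_{\mathrm{conv}}$ by a common variable $W$ is again dominated by $W$, because $\varphi$ of an average is at most the average of the values of $\varphi$. The essential point is that this last fact uses nothing about the joint law of the $D_i$ (nor of the conditional variances): the dependence is entirely irrelevant, only the one-variable dominations matter. Applying the third fact with $W=2Y^2$ to the $n$ summands produced by the second yields $S\leq_{\mathrm{conv}}2Y^2$.

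Finally I would invoke Lemma~\ref{lem:ordre_convexe} with the pair $(S,2Y^2)$ at level $y^2$, which gives $\PP\ens{S>y^2}\leq\int_1^{+\infty}\PP\ens{2Y^2>y^2u/4}\mathrm du=\int_1^{+\infty}\PP\ens{Y^2>y^2u/8}\mathrm du$; a linear change of variables brings the right-hand side into the form of the second summand of \eqref{eq:inegalite_max_martingales_moments_ordre_p_domination_sto}, up to the explicit numerical constants, which can be tracked by the same bookkeeping. Adding the two contributions completes the proof. No single step is a serious obstacle once Theorem~\ref{thm:Fan_Grama_Liu} (through its maximal corollary) and Lemma~\ref{lem:ordre_convexe} are available; the only point that demands care is the convex-order manipulation above — in particular the compatibility of the choice $v^2=ny^2$ and of the multiples of $Y^2$ with the factor $4$ built into Lemma~\ref{lem:ordre_convexe}.
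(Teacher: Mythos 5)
You follow the paper's architecture exactly: the maximal inequality derived from Theorem~\ref{thm:Fan_Grama_Liu} with threshold $xn^{1/2}$ and $v^2=ny^2$, then a convex-ordering reduction of the quadratic-variation term to a tail functional of $Y^2$ via Lemma~\ref{lem:ordre_convexe}; your three convex-order facts (conditional Jensen, $\varphi\pr{a+b}\leq\frac12\varphi\pr{2a}+\frac12\varphi\pr{2b}$, and averaging) are correct and are the same computations as in the paper. The gap is in the endgame. Keeping the quadratic variation as one event and dominating $S=\frac1n\sum_{i=1}^n\pr{D_i^2+\E{D_i^2\mid\Fca_{i-1}}}$ by $2Y^2$, Lemma~\ref{lem:ordre_convexe} gives
\begin{equation*}
\PP\ens{S>y^2}\leq\int_1^{+\infty}\PP\ens{Y^2>y^2u/8}\,\mathrm{d}u=2\int_{1/2}^{+\infty}\PP\ens{Y^2>y^2w/4}\,\mathrm{d}w,
\end{equation*}
which is strictly weaker than the second summand $2\int_1^{+\infty}\PP\ens{Y^2>y^2u/4}\mathrm{d}u$ of \eqref{eq:inegalite_max_martingales_moments_ordre_p_domination_sto}: the extra contribution over $\pr{1/2,1}$ cannot be removed by any change of variables. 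Concretely, if $Y^2=y^2/6$ almost surely, your bound equals $1/3$ while the stated second summand is $0$. So the claim that the constants ``can be tracked by the same bookkeeping'' is not correct; as written you prove a variant of the proposition with $y^2u/8$ in place of $y^2u/4$, not the proposition itself, and these constants matter downstream since they feed the induction producing $B_r$ and $C_r$.

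The paper obtains the stated constants by never merging the two terms: since Theorem~\ref{thm:Fan_Grama_Liu} involves the two events $\ens{\sum_{i\leq k}D_i^2\leq v^2}$ and $\ens{\sum_{i\leq k}V_{i-1}\leq w}$ separately, one can take $v^2=w=ny^2$ and obtain, besides the exponential term, the two probabilities $\PP\ens{\sum_{i=1}^nD_i^2>ny^2}$ and $\PP\ens{\sum_{i=1}^n\E{D_i^2\mid\Fca_{i-1}}>ny^2}$, each at the full threshold $ny^2$. One then shows $\frac1n\sum_{i=1}^nD_i^2\conv Y^2$ and $\frac1n\sum_{i=1}^n\E{D_i^2\mid\Fca_{i-1}}\conv Y^2$ (your facts, with no factor $2$ anywhere) and applies Lemma~\ref{lem:ordre_convexe} twice, which yields exactly $2\int_1^{+\infty}\PP\ens{Y^2>y^2u/4}\mathrm{d}u$. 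Note that this split cannot be recovered after the fact from your merged event, since $S>y^2$ only forces one of the two averages to exceed $y^2/2$, not $y^2$; the separation must be made at the level of the martingale inequality itself. With that single modification your argument coincides with the paper's proof.
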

 
\begin{proof}
We apply Theorem~\ref{thm:Fan_Grama_Liu} in the following setting: 
$x$ is replaced by $xn^{1/2}$ and $v=n^{1/2}y$. We obtain that 
 \begin{multline} 
 \PP\ens{\max_{1\leq k\leq n}\abs{\sum_{i=1}^kD_i}>xn^{1/2}   }
 \leq 2\exp\pr{-\frac 12\pr{\frac xy}^{2   }  }  \\
    +\PP\ens{\sum_{i=1}^n
  D_i^2>ny^2}+\PP\ens{\sum_{i=1}^n\E{ D_i ^2\mid \Fca_{i-1}}>ny^2}.
 \end{multline}
 In order to control the last two terms, we will show that 
 \begin{equation}\label{eq:ordre_convexe_pour_variances_cond}
 \frac 1n \sum_{i=1}^nD_i^2\conv Y^2\mbox{ and }
 \frac 1n \sum_{i=1}^n\E{D_i^2\mid \Fca_{i-1}}\conv Y^2.
 \end{equation}
 Let $\varphi\colon\R_+\to \R_+$ be a convex non-decreasing function. Then 
 by convexity, 
 \begin{equation}
 \E{\varphi\pr{ \frac 1n \sum_{i=1}^nD_i^2}}\leq 
  \E{ \frac 1n \sum_{i=1}^n\varphi\pr{D_i^2}}
 \end{equation}
 and from the fact that $D_i^2\conv Y^2$, it follows  that $\E{  \varphi\pr{D_i^2}}\leq 
 \E{\varphi\pr{D_i^2}}$ hence 
  \begin{equation}
 \E{\varphi\pr{ \frac 1n \sum_{i=1}^nD_i^2}}\leq 
  \E{\varphi\pr{Y^2}}.
 \end{equation}
 Moreover, by convexity of $\varphi$ and Jensen's inequality, 
 \begin{equation*}
  \E{\varphi\pr{ \frac 1n \sum_{i=1}^n\E{D_i^2\mid\Fca_{i-1}   }}}\leq 
  \E{ \frac 1n \sum_{i=1}^n\varphi\pr{\E{D_i^2\mid\Fca_{i-1}   }}}
  \leq 
  \E{ \frac 1n \sum_{i=1}^n\E{\varphi\pr{D_i^2}\mid\Fca_{i-1}   }}
 \end{equation*}
 hence 
 \begin{equation}
 \E{\varphi\pr{ \frac 1n \sum_{i=1}^n\E{D_i^2\mid\Fca_{i-1}   }}}\leq 
  \frac 1n \sum_{i=1}^n\E{\varphi\pr{D_i^2}    }.
 \end{equation}
 By the same argument as before, 
  \begin{equation*}
  \E{\varphi\pr{ \frac 1n \sum_{i=1}^n\E{D_i^2\mid\Fca_{i-1}   }}}\leq 
 \E{\varphi\pr{Y^2}}
 \end{equation*}
which proves \eqref{eq:ordre_convexe_pour_variances_cond}. 
We conclude by applying Lemma~\ref{lem:ordre_convexe} 
twice, first to  $Z= n^{-1}\sum_{i=1}^n D_i^2$ and 
then to $Z= n^{-1}\sum_{i=1}^n\E{ D_i^2\mid\Fca_{i-1}}$. 
\end{proof} 
 
 \subsubsection{Step $r=1$}
 
As said before, the proof of Theorem~\ref{thm:inegalite_deviation_U_stats} 
is done by induction on $r$. In this subsubsection, we treat the case $r=1$.
In this case, $U_n=\sum_{i=1}^nh\pr{X_i}$ and condition
 \eqref{eq:condition_de_degeneration} means that  $h\pr{X_i}$ is centered. 
 Therefore, an application of Proposition~\ref{prop:inegalite_deviation_martingales_dominee} 
 to $D_j:= h\pr{X_j}$ gives the case $r=1$ of Theorem~\ref{thm:inegalite_deviation_U_stats}.
 
  \subsubsection{The induction step}
 
Let $A\pr{r}$ be the following assertion: "for each measurable space $\pr{S,\Sca}$, each measurable function 
 $h\colon 
S^r\to \R$ (with $S^r$ induced with the product 
$\sigma$-algebra) and each  i.i.d. sequence  $\pr{X_i}_{i\geq 1}$ of 
$S$-valued random variables such that  
\begin{equation} 
\E{h\pr{v_q\pr{x_1,\dots,x_{r-1},X_0}}}=0,
\end{equation} 
holds for all $s_1,\dots,s_{r-1}\in S$ and for all $q\in \ens{1,\dots,r}$,  
the following inequality holds for all positive $x$ and all $y$ such that 
$x/y>3^r$:
\begin{multline}
\PP\ens{\max_{r\leq n\leq N}\abs{U_n}>N^{r/2}x} 
\leq A_{r}\exp\pr{-\frac 12\pr{\frac{x}y}^{ \frac{2}{r    }}}
\\+B_{r}\int_{1}^{+\infty}
\PP\ens{\abs{h\pr{X_1,\dots,X_r}}> y v^{1/2}C_{r}     } 
\pr{1+\ln\pr{ v}}^{q_{r}}\mathrm dv".
\end{multline}
We have shown in the previous subsubsection that $A\pr{1}$ is true. 

Now, we assume that $A\pr{r-1}$ is true for some $r\geq 2$ 
and we will prove $A\pr{r}$. Let  $\pr{S,\Sca}$ be a measurable space, 
 $h\colon 
S^r\to \R$ (with $S^r$ induced with the product 
$\sigma$-algebra) be a measurable function,  $\pr{X_i}_{i\geq 1}$ be an i.i.d. sequence of 
$S$-valued random variables satisfying 
\begin{equation} 
\E{h\pr{v_q\pr{x_1,\dots,x_{r-1},X_0}}}=0,
\end{equation} 
 for all $s_1,\dots,s_{r-1}\in S$ and for all $q\in \ens{1,\dots,r}$,  
and  $x$ and $y$ be positive numbers such that  
$x/y>3^r$. We will control 
$\PP\ens{\max_{r\leq n\leq N}\abs{U_n}>N^{r/2}x}$ 
by a quantity that can be treated by the case of $U$-statistics 
of order $r-1$.

We define 
\begin{equation}
D_j:= \frac 1{N^{\frac{r-1}2}}\som{\pr{i_1,\dots,i_{r-1}}\in I_j^{r-1}}{} h\pr{X_{i_1},\dots,X_{i_{r-1}},X_j}, \quad j\geq r,
\end{equation}
 where the notation $I_{j}^{r-1}$ refers to \eqref{eq:definition_de_I_n_r} and 
 $D_j=0$ for $1\leq j\leq r-1$. Then for all $n\geq r$, $\frac 1{N^{r/2}}
 U_n=   \frac 1{N^{1/2}}\sum_{j=1}^nD_j$. Moreover, defining 
 $\Fca_j$ as the $\sigma$-algebra generated by the random variables $X_i$, $1\leq i\leq j$ for 
 $j\geq 1$ and $\Fca_0:=\ens{\emptyset, \Omega}$, 
 the sequence $\pr{D_j}_{j\geq 1}$ is a martingale differences 
 sequence with respect to the filtration $\pr{\Fca_j}_{j\geq 0}$. 
  Let 
  \begin{equation}
Y:= \frac 1{N^{\frac{r-1}2}}\max_{r-1\leq k\leq N}\abs{ U'_k},   
 \end{equation}
where 
\begin{equation}
U'_k:= \sum_{\pr{i_1,\dots,i_{r-1}}\in I_k^{r-1}}
h\pr{X_{i_1},\dots,X_{i_{r-1}},X}
\end{equation} 
and $X$ is a random variable which is independent of $\pr{X_i}_{i\geq 1}$ 
and has the same distribution as $X_0$. We will show that for all $j\geq r$, 
$D_i^2\conv Y^2$. First observe that since 
$X_j$ is independent of the vector $\pr{X_i}_{i=1}^{j-1}$, it follows that 
$D_j$ has the same distribution as 
\begin{equation}
D'_j:= \frac 1{N^{\frac{r-1}2}}\som{\pr{i_1,\dots,i_{r-1}}\in I_j^{r-1}}{}
 h\pr{X_{i_1},\dots,X_{i_{r-1}},X}, \quad j\geq r.
\end{equation} 

 Let $\varphi\colon\R_+\to \R_+$ be a convex non-decreasing function. Then 
 \begin{equation}\label{eq:lien_Dj_D'j}
 \E{\varphi\pr{D_i^2}}=\E{\varphi\pr{ {D'_j}^2}}.
 \end{equation}
Since 
\begin{equation}
 {D'_j} ^2= \frac 1{N^{r-1}} {U'_j}^2\leq Y^2
\end{equation}
we get from non-decreasingness of $\varphi$ that $\E{\varphi\pr{ {D'_j}^2}}
\leq \E{\varphi\pr{Y^2}}$ hence in view of \eqref{eq:lien_Dj_D'j}, it follows that 
$D_j^2\conv Y^2$. Writing 
\begin{equation}
\PP\ens{\max_{r\leq n\leq N}\abs{U_n}>N^{r/2}x}
=\PP\ens{\max_{1\leq n\leq N}\abs{\sum_{j=1}^n  D_j}>N^{1/2}x},
\end{equation}
we are in position to apply  Proposition~\ref{prop:inegalite_deviation_martingales_dominee} 
with $\widetilde{x}:=x$ and $\widetilde{y}:= x^{1-1/r}y^{1/r}$. 
We obtain 
\begin{equation}\label{eq:se_ramener_au_cas_r-1}
\PP\ens{\max_{r\leq n\leq N}\abs{U_n}>N^{r/2}x}
\leq 2\exp\pr{-\frac 12\pr{\frac{x}{y}}^{2}  } 
 + 2\int_{1 }^{+\infty}\PP\ens{ Y>x^{1-1/r}y^{1/r}\pr{u/4}^{1/2}}\mathrm{d}u.
\end{equation}
 In order to control the last term, we use the fact that 
 $X$ is independent of the sequence $\pr{X_i}_{i\geq 1}$ hence 
\begin{multline}\label{eq:preparation_de_l_etape_de_recurrence}
\PP\ens{ Y>x^{1-1/r}y^{1/r}\pr{u/4}^{1/2}}\\
=\int_{S}   \PP\ens{ \frac 1{N^{\frac{r-1}2}      }     
\max_{r-1\leq k\leq n} \abs{ \sum_{\pr{i_1,\dots,i_{r-1}}\in I_k^{r-1}}
h\pr{X_{i_1},\dots,X_{i_{r-1}},s}         }>x^{1-1/r}y^{1/r}\pr{u/4}^{1/2}     }\mathrm{d}\PP_X\pr{s}.
\end{multline}
We  bound the probability inside the integral for each fixed $s$. To this aim, we 
use the assumption that the assertion $A\pr{r-1}$ is true. Define for a fixed $s\in S$ 
the function $\widetilde{h}\colon S^{r-1}\to \R$ by 

\begin{equation}
 \widetilde{h}\pr{x_1,\dots,x_{r-1}}
=h\pr{x_1,\dots,x_{r-1},s}.
\end{equation}
Then $\widetilde{h}$ satisfied the condition \eqref{eq:condition_de_degeneration} 
with $r$ replaced by $r-1$. Let 
\begin{equation}
\widetilde{x}:= x^{1-1/r}y^{1/r}\pr{u/4}^{1/2},       \quad \widetilde{y}:= y\pr{u/4}^{1/2}
\pr{1+2\ln\pr{ u}}^{-\pr{r-1}\frac{1}2}.
\end{equation}
Since $x/y\geq 3^r$, it follows that   $\widetilde{x}/\widetilde{y}=\pr{x/y}^{1-1/r   }\pr{1+
2\ln\pr{ u}}^{\pr{r-1}\frac{1}2}
\geq \pr{x/y}^{1-1/r   }\geq 3^{r-1}$ hence we are in position to apply $A\pr{r-1}$, which 
gives
\begin{multline}
\PP\ens{ \frac 1{N^{\frac{r-1}2}      }     
\max_{r-1\leq k\leq n} \abs{ \sum_{\pr{i_1,\dots,i_{r-1}}\in I_k^{r-1}}
h\pr{X_{i_1},\dots,X_{i_{r-1}},s}         }>x^{1-1/r}y^{1/r}\pr{u/4}^{1/2}   }
\\ \leq  A_{r-1}\exp\pr{-\frac 12\pr{\frac{\widetilde{x}}{\widetilde{y}}}^{ \frac{2}{ r-1    }}}
\\+B_{r-1}\int_{1}^{+\infty}
\PP\ens{\abs{ \widetilde{h}\pr{X_1,\dots,X_{r-1}}}> \widetilde{y} v^{1/2}C_{r-1}    } 
\pr{1+\ln\pr{ v}}^{q_{r-1}}\mathrm dv.
\end{multline}
Replacing $\widetilde{x}$, $\widetilde{y}$ and $\widetilde{h}$ by their corresponding 
expression and integrating over $S$ with respect to the law of $X$ gives in view of 
\eqref{eq:preparation_de_l_etape_de_recurrence} that 
\begin{multline}\label{eq:control_queue_de_Y}
\PP\ens{ Y>x^{1-1/r}y^{1/r}\pr{u/4}^{1/p}} 
\leq A_{r-1}\exp\pr{-\frac 12\pr{  \pr{\frac xy}^{\frac{2}{r }}
 \pr{1+2\ln\pr{ u}} }}\\
 +B_{r-1}\int_{1 }^{+\infty}
\PP\ens{\abs{ h\pr{X_1,\dots,X_{r-1},X}}> y\pr{u/4}^{1/2}
\pr{1+\ln\pr{ u}}^{-\pr{r-1}\frac{ 1}2} v^{1/2}C_{r-1}       } 
\pr{1+\ln\pr{ v}}^{q_{r-1}}\mathrm dv.
\end{multline}
We derive in 
view of \eqref{eq:se_ramener_au_cas_r-1} and the fact that 
$x/y>3^r$ 
that 
\begin{equation}
\int_{1 }^{+\infty}
\exp\pr{-\frac 12\pr{  \pr{\frac xy}^{\frac{2}{r }}
 \pr{1+2\ln\pr{ u} } }}\mathrm du 
\leq \exp\pr{-\frac 12  \pr{\frac xy}^{\frac{2}{r }}}
\int_{1}^{+\infty}
 t^{-6}\mathrm dt
\end{equation} 
hence computing the integral,  
\begin{equation}
\int_{1 }^{+\infty}
\exp\pr{-\frac 12\pr{  \pr{\frac xy}^{\frac{2}{r }}
 \pr{1+2\ln\pr{ u} } }}\mathrm du \leq  
 \frac{1}{2 }  \exp\pr{-\frac 12 \pr{\frac xy}^{\frac{2}{r }}}.
\end{equation}
Integrating \eqref{eq:control_queue_de_Y} with respect to $u$ on 
$\pr{1 ,+\infty}$, we obtain 
\begin{multline}\label{eq:apres_utilisation_recurrence}
\PP\ens{\max_{r\leq n\leq N}\abs{U_n}>N^{r/2}x}
\leq \pr{2+A_{r-1}/2}\exp\pr{-\frac 12\pr{\frac{x}{y}}^{2}  } \\
 +  B_{r-1}\int_{1 }^{+\infty}\int_{1 }^{+\infty}
 \PP\ens{\abs{ h\pr{X_1,\dots,X_{r-1},X_r}}>  y\pr{u/4}^{1/2}
\pr{1+\ln\pr{u}}^{-\pr{r-1}\frac{ 1}2} v^{1/2}C_{r-1}       } 
\pr{1+\ln\pr{ v}}^{q_{r-1}}\mathrm dv\mathrm{d}u.
\end{multline}
In order to control the last term, we will make a use of the following lemma. 

\begin{Lemma}\label{lem:integrale_double_avec_log}
Let $X$ be a non-negative random variable and let $q,q'$ be   non-negative
numbers. 
Define
\begin{equation}
 \kappa_q=\begin{cases}
           1&\mbox{ if }q\leq 1\\
           e^{q-1}/q^q&\mbox{ if }q> 1.
          \end{cases}
\end{equation}

Then 
\begin{multline}\label{eq:resultat_lemme_double_integrale}
\int_{1 }^{+\infty}\int_{1 }^{+\infty}
 \PP\ens{X> u 
\pr{1+\ln\pr{ u}}^{-  q_1  } v     } 
\pr{1+\ln\pr{ v}}^{q_2}\mathrm dv\mathrm{d}u\\ 
\leq \frac{2^{q_1}}{\kappa{q_1} }\pr{1+\ln\frac{\kappa_{2q_1}}{\kappa_{q_1}}}^{q_1}
\int_1^{+\infty} \PP\ens{X>t/\kappa_{q_1}    }\pr{1+\ln t}^{q_1+q_2+1}dt.
\end{multline} 
\end{Lemma}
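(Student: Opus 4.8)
The plan is to collapse the double integral to a one-dimensional integral against the tail $w\mapsto\PP\ens{X>w}$ by Tonelli's theorem and then to estimate the resulting kernel by the substitution $u=e^s$. Write $g(u):=u(1+\ln u)^{-q_1}$ for $u\geq 1$. For a fixed $u$ the change of variables $w=g(u)v$ in the inner integral gives $\int_1^{+\infty}\PP\ens{X>g(u)v}(1+\ln v)^{q_2}\,\mathrm dv=g(u)^{-1}\int_{g(u)}^{+\infty}\PP\ens{X>w}(1+\ln(w/g(u)))^{q_2}\,\mathrm dw$, and Tonelli's theorem then yields
\begin{equation*}
\int_1^{+\infty}\int_1^{+\infty}\PP\ens{X>g(u)v}(1+\ln v)^{q_2}\,\mathrm dv\,\mathrm du
=\int_0^{+\infty}\PP\ens{X>w}\,\Psi(w)\,\mathrm dw ,
\end{equation*}
where $\Psi(w):=\int_{\ens{u\geq 1\,:\,g(u)\leq w}}g(u)^{-1}(1+\ln(w/g(u)))^{q_2}\,\mathrm du$. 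The identity $g'(u)=(1+\ln u)^{-q_1-1}(1+\ln u-q_1)$ shows that $g$ attains its minimum $\kappa_{q_1}$ on $[1,+\infty)$ (at $u=1$ if $q_1\leq 1$, at $u=e^{q_1-1}$ if $q_1>1$); hence $\Psi(w)=0$ for $w<\kappa_{q_1}$.

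The point of the substitution $u=e^s$ is that it turns the measure $g(u)^{-1}\,\mathrm du$ into $(1+s)^{q_1}\,\mathrm ds$, since $g(e^s)=e^s(1+s)^{-q_1}$. With $\lambda(s):=\ln g(e^s)=s-q_1\ln(1+s)$ this gives
\begin{equation*}
\Psi(w)=\int_{\ens{s\geq 0\,:\,\lambda(s)\leq\ln w}}(1+s)^{q_1}\,(1+\ln w-\lambda(s))^{q_2}\,\mathrm ds .
\end{equation*}
On the range of integration $0\leq\ln w-\lambda(s)\leq\ln w-\min\lambda=\ln(w/\kappa_{q_1})$, so the last factor is at most $(1+\ln(w/\kappa_{q_1}))^{q_2}$; and since $\lambda$ decreases on $[0,\max(q_1-1,0)]$ and increases afterwards, the set $\ens{s\geq 0:\lambda(s)\leq\ln w}$ is an interval contained in $[0,s_\star(w)]$, where $s_\star(w)$ is the largest root of $\lambda(s)=\ln w$. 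Therefore $\Psi(w)\leq(q_1+1)^{-1}(1+\ln(w/\kappa_{q_1}))^{q_2}(1+s_\star(w))^{q_1+1}$.

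The one genuinely delicate point is the estimate $1+s_\star(w)\leq c_{q_1}(1+\ln(w/\kappa_{q_1}))$, with $c_{q_1}$ depending only on $q_1$. Setting $P:=1+s_\star(w)$: the function $p\mapsto p-q_1\ln p$ has minimum $1+\ln\kappa_{q_1}$ on $[1,+\infty)$, and since $1+\ln w\geq 1+\ln\kappa_{q_1}$ the number $P$ lies on its increasing branch and satisfies $P-q_1\ln P=1+\ln w$. Subtracting the minimal value rewrites this as $1+\ln(w/\kappa_{q_1})=\Theta_{q_1}(P)$ for an explicit increasing function $\Theta_{q_1}$, and a short one-variable maximization of $P\mapsto P/\Theta_{q_1}(P)$ (the worst case being, e.g., $P=q_1e^{1/q_1}$ when $q_1\geq 1$) produces the asserted bound. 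Plugging it back, $\Psi(w)\leq c_{q_1}'(1+\ln(w/\kappa_{q_1}))^{q_1+q_2+1}$ with $c_{q_1}'$ depending only on $q_1$.

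Finally, inserting this into $\int_{\kappa_{q_1}}^{+\infty}\PP\ens{X>w}\Psi(w)\,\mathrm dw$ and substituting $w=\kappa_{q_1}t$ gives a bound of the form $c_{q_1}'\kappa_{q_1}\int_1^{+\infty}\PP\ens{X>\kappa_{q_1}t}(1+\ln t)^{q_1+q_2+1}\,\mathrm dt$, which has exactly the shape of the right-hand side of the statement, with the same power $q_1+q_2+1$ of the logarithm; it then remains only to verify, by an explicit computation with the $\kappa$'s (this is where the factors $2^{q_1}$ and $\kappa_{2q_1}$ enter), that the constant $c_{q_1}'\kappa_{q_1}$ is no larger than the constant appearing in the statement. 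The steps I expect to demand the most care are the transcendental estimate on $s_\star(w)$ and this last comparison of constants; everything else is bookkeeping.
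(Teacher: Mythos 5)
Your proof is correct in its analytic substance and follows the same skeleton as the paper's argument (inner substitution $w=g(u)v$, Tonelli, a kernel supported above the minimum of $g$, final rescaling $w=\kappa_{q_1}t$), but the key estimate of the kernel is handled by a genuinely different device. The paper never identifies the sublevel set $\ens{u\geq 1: g_{q_1}(u)\leq w}$: it applies its basic inequality with the doubled exponent $2q_1$, which gives $g_{q_1}(u)\geq c\sqrt u$, so the sublevel set sits inside an explicit interval over which $g_{q_1}(u)^{-1}=(1+\ln u)^{q_1}/u$ integrates in closed form to $\frac 1{q_1+1}\pr{1+2\ln(\cdot)}^{q_1+1}$; this doubling trick is precisely where the factors $2^{q_1}$ and $\kappa_{2q_1}$ of the stated constant come from. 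You instead describe the sublevel set exactly through $u=e^s$ and the largest root $s_\star(w)$ of $s-q_1\ln(1+s)=\ln w$, and your transcendental estimate $1+s_\star(w)\leq c_{q_1}\pr{1+\ln(w/\kappa_{q_1})}$ (critical point $P=q_1e^{1/q_1}$ and $c_{q_1}=e^{1/q_1}/(e^{1/q_1}-1)$ for $q_1\geq 1$) is correct; you obtain the same exponent $q_1+q_2+1$ and a slightly sharper kernel bound, at the price of a less explicit constant. The deferred comparison of constants is the one loose end, and it is entangled with a defect of the statement itself: with $\kappa_q$ exactly as displayed (so $\kappa_q\leq 1$ for $q>1$) the minimum of $g_{q_1}$ is $\kappa_{q_1}$ and your final bound involves $\PP\ens{X>\kappa_{q_1}t}$, whereas the statement has $\PP\ens{X>t/\kappa_{q_1}}$ — these are not pointwise comparable — and the displayed factor $1+\ln\pr{\kappa_{2q_1}/\kappa_{q_1}}$ is even negative for $q_1\geq 2$. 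The paper's proof in fact uses the reciprocal convention $(1+\ln u)^q\leq\kappa_q u$, i.e. $\kappa_q=\max(1,q^q/e^{q-1})$; adopting it, your argument reproduces the statement's exact shape with constant $c_{q_1}^{q_1+1}/\pr{(q_1+1)\kappa_{q_1}}$, and what remains is routine bookkeeping against the displayed constant (whose logarithmic factor should in any case carry the exponent $q_1+q_2+1$ rather than $q_1$, as the paper's own last step shows).
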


\begin{proof}
Define the function $g_q\colon u\mapsto u/\pr{1+\ln u}^{-q}$ 
for $u\geq 1$. Then 
$g'_q\pr{u}= \pr{1+\ln u}^{-q}-uq \pr{1+\ln u}^{-q-1}\frac 1u
= \pr{1+\ln u}^{-q-1}\pr{1+\ln u-q}$ hence the 
function $g_q$ reaches its minimum at $u=e^{q-1}$. In particular, 
\begin{equation}\label{eq:controle_fct_gq}
 \pr{1+\ln u}^q\leq \kappa_q u, u\geq 1,
\end{equation}
where 
\begin{equation}
 \kappa_q=\begin{cases}
           1&\mbox{ if }q\leq 1\\
           e^{q-1}/q^q&\mbox{ if }q> 1.
          \end{cases}
\end{equation}

We  do the substitution $w=u 
\pr{1+\ln\pr{ u}}^{-  q_1 } v $ for a fixed $u$. Then 
\begin{multline}\label{eq:simplication_double_integral}
 \int_{1 }^{+\infty}\int_{1 }^{+\infty}
 \PP\ens{X> u 
\pr{1+\ln\pr{ u}}^{-  q_1 } v     } 
\pr{1+\ln\pr{ v}}^{q_2}\mathrm dv\mathrm{d}u\\= 
\int_{1 }^{+\infty}\int_{g_{q_1}\pr{u}   }^{+\infty}
 \PP\ens{X> w    } 
\pr{1+\ln\pr{w /g_{q_1}\pr{u}      }}^{q_2}  
g_{q_1}\pr{u}^{-1}\mathrm dw\mathrm{d}u.
\end{multline}

Since $g_{q_1}\pr{u}^{-1}\leq \kappa_q$, it follows that 
\begin{multline}\label{eq:simplication_double_integral_bis}
  \int_{1 }^{+\infty}\int_{1 }^{+\infty}
 \PP\ens{X> u 
\pr{1+\ln\pr{ u}}^{-  q_1 } v     } 
\pr{1+\ln\pr{ v}}^{q_2}\mathrm dv\mathrm{d}u\\
\leq 
\int_{0 }^{+\infty}\PP\ens{X>w}
I\pr{w}\pr{1+\ln\pr{w\kappa_{q_1}      }}^{q_2}  dw
\end{multline}
where 
\begin{equation}
 I\pr{w}=\int_1^{+\infty}
 \mathbf{1}_{\pr{0,w}}\pr{g_{q_1}\pr{u}   } 
 g_{q_1}\pr{u}^{-1}
 du.
\end{equation}
Observe that if $w< 1/\kappa_{q_1}$, then 
$I\pr{w}=0$. Assume now that $w \geq 1/\kappa_{q_1}$. 
Using \eqref{eq:controle_fct_gq} with $q=2q_1 $, we get that 
$g_{2q_1 }\pr{u}\geq \kappa_{2q_1}^{-1}$ hence 
$g_{q_1}\pr{u}\geq \kappa_{2q_1}^{-1}
\sqrt u$ and it follows that 
\begin{equation}
\mathbf{1}_{\pr{0,w}}\pr{g_{q_1}\pr{u}   } \leq 
\mathbf{1}_{\pr{0,\kappa_{2q_1}^2w^2}}\pr{u}     
\end{equation}
hence 
\begin{equation}
 I\pr{w}\leq \frac 1{q_1+1}
 \pr{1+2\ln\pr{w\kappa_{2q_1}}  }^{q_1+1}
\end{equation}
hence by  \eqref{eq:simplication_double_integral_bis}, 
\begin{multline}
   \int_{1 }^{+\infty}\int_{1 }^{+\infty}
 \PP\ens{X> u 
\pr{1+\ln\pr{ u}}^{-  q_1 } v     } 
\pr{1+\ln\pr{ v}}^{q_2}\mathrm dv\mathrm{d}u \\
\leq 
\frac 1{q_1+1}
\int_{1/\kappa_{q_1}}^{+\infty}\PP\ens{X>w}
 \pr{1+2\ln\pr{w\kappa_{2q_1}}  }^{q_1+1}\pr{1+\ln\pr{w\kappa_{q_1}      }}^{q_2}  dw\\
 \leq 
2^{q_1} 
\int_{1/\kappa_{q_1}}^{+\infty}\PP\ens{X>w}
 \pr{1+ \ln\pr{w\kappa_{2q_1}}  }^{q_1+q_2+1} \mathrm{d}w.
\end{multline}

We get \eqref{eq:resultat_lemme_double_integrale}
after the substitution 
$t=\kappa_{q_1}w$ and the elementary inequality $
\pr{1+\ln\pr{at}}^q\leq \pr{1+\ln a}^q\pr{1+\ln t}^q$.
 This ends the proof of Lemma~\ref{lem:integrale_double_avec_log}.
\end{proof}

To conclude the proof of 
Theorem~\ref{thm:inegalite_deviation_U_stats}, we 
apply \eqref{eq:apres_utilisation_recurrence} in the following setting:
\begin{itemize}
\item $X:=4\abs{h\pr{X_1,\dots,X_r}}^2 C_{r-1}^{-2}y^{-2}$;
\item $q_1=  r-1 $;
\item $q_2=q_{r-1}$. 
\end{itemize}

\subsubsection{Proof of Corollary~\ref{cor:extension_inegalite_de_deviation}}

We start from \eqref{eq:decomposition_de_Hoeffding_deg_ordre_i_1}. 
We get that for each $r\leq n\leq N$, 
\begin{equation}
 \frac{1}{N^{r-i/2 }}\abs{U_{r,n}\pr{h}}
 \leq \sum_{k=i}^r\frac 1{\pr{r-k}!}N^{-k+i/2}
 \abs{U_{k,n}\pr{h_k}}
\end{equation}
and taking the maximum over $n\in\ens{r,\dots,N}$ 
gives 
\begin{equation}
 \frac{1}{N^{r-i/2}}\max_{r\leq n\leq N}
 \abs{U_{r,n}\pr{h}}
 \leq \sum_{k=i}^r\frac 1{\pr{r-k}!}N^{-k+i/2}
 \max_{k\leq n\leq N}
 \abs{U_{k,n}\pr{h_k}}.
\end{equation}
It follows that 
\begin{equation}
 \PP\ens{\max_{r\leq n\leq N}\abs{U_n}>N^{r-i/2}x} 
\leq 
\sum_{k=i}^r \PP\ens{
\frac 1{\pr{r-k}!}N^{-k+i/2}
 \max_{k\leq n\leq N}
 \abs{U_{k,n}\pr{h_k}}>x/r
}.
\end{equation}
For all $k\in\ens{i,\dots,r}$, the equality 
\begin{equation}
 \PP\ens{
\frac 1{\pr{r-k}!}N^{-k+i/2}
 \max_{k\leq n\leq N}
 \abs{U_{k,n}\pr{h_k}}>x/r
}= 
\PP\ens{
 \frac 1{N^{k/2}}\max_{k\leq n\leq N}
 \abs{U_{k,n}\pr{h_k}}>\widetilde{x}   }
\end{equation}
holds, 
where $\widetilde{x}=N^{ \pr{k-i}/2}x\pr{r-k}!/r$.

We can therefore apply Theorem~\ref{thm:inegalite_deviation_U_stats} 
in the setting 
\begin{itemize}
 \item $\widetilde{x}=N^{ \pr{k-i}/2}x\pr{r-k}!/r$; 
 \item $\widetilde{y}=N^{ \pr{k-i}/2}y\pr{r-k}!/r$ and 
 \item $\widetilde{r}=k$
\end{itemize}
to get the wanted result.

\subsection{Proof of Corollary~\ref{cor:inegalite_moments_exp}}

We first use Corollary~\ref{cor:extension_inegalite_de_deviation} with a
 $y<3^{-r}x$ that will be specified later. Observe that in view of Markov's inequality, 
\begin{equation}
 \int_{1 }^{+\infty}
\PP\ens{\abs{Y_k}> y 
N^{\pr{k-i}\frac{1}{2}}u ^{1/2} C_{r,2}   } 
\pr{1+\ln\pr{ u}}^{q_{k,2}}\mathrm du
\leq I\pr{y}\E{\exp\pr{ \abs{Y_k}^\gamma  }},
\end{equation}
where 
\begin{equation}\label{eq:definition_de_I_y}
I\pr{y}:= 
 \int_{1 }^{+\infty}
\exp\pr{-  \pr{y 
N^{\pr{k-i}\frac{ 1}{2}}u ^{1/2} C_{r,2}}^\gamma  } 
\pr{1+\ln\pr{ u}}^{q_{k,2}}\mathrm du.
\end{equation}
Now, noticing that $\E{\exp\pr{  \abs{Y_k}^\gamma  }}\leq c_\gamma
\E{\exp\pr{ \abs{h\pr{X_1,\dots,X_r}}^\gamma  }}$, it is sufficient to 
treat the case $k=i$. 

Doing the substitution $v=u^{\gamma/2}-1$, we obtain 
\begin{equation}
I\pr{y}= \frac 2{\gamma}\int_{1 }^{+\infty}
\exp\pr{- C_{r,2}^\gamma y^\gamma  \pr{v+1}  }
\pr{1+\ln\pr{ \pr{v+1}^{2/\gamma}    }}^{q_{r,2}}\pr{1+v}^{\frac{2}{\gamma}-1}
\mathrm dv.
\end{equation}
Assume that $y$ is such that 
\begin{equation}\label{eq:condition_sur_y}
y^\gamma C_{r,2}^\gamma \geq 1.
\end{equation}

Then 
\begin{equation}
I\pr{y}\leq \exp\pr{-R C_{r,2}^\gamma y^\gamma    }
\cdot \frac 2{\gamma}\int_{1 }^{+\infty}
\exp\pr{- v   }
\pr{1+\ln\pr{ \pr{v+1}^{2/\gamma}    }}^{q_{r,2}}\pr{1+v}^{\frac{2}{\gamma}-1}
\mathrm dv.
\end{equation}
Now, we choose $y<x3^{-r}$ satisfying \eqref{eq:condition_sur_y} such that 
\begin{equation}
 \frac 12\pr{\frac xy}^{ 2/i    }=  C_{r,2}^\gamma y^\gamma,
\end{equation}
This leads to the choice 
\begin{equation}
y:=x^{\frac{2}{2+i\gamma}}2^{-\frac{1}{\gamma+2/i}}C_{r,2}^{-\frac{\gamma}{\gamma+2/i}}.
\end{equation}
Due to the definition of $x_{r,i\gamma}$, the inequalities $x/y\geq 3^r$ and 
\eqref{eq:condition_sur_y} hold. This choice of $y$ combined with the observation that 
$1\leq \E{\exp\pr{ \abs{h\pr{X_1,\dots,X_r} }^\gamma   }}$ gives 
\eqref{eq:inegalite_Ustats_moments_expo}, which ends the proof of 
Corollary~\ref{cor:inegalite_moments_exp}.

\subsection{Proof of Theorem~\ref{thm_Baum_Katz_Ustats}}

Let $\eps>0$. Observe that for all positive $C$, 
 \begin{equation}
 \PP\ens{\max_{r\leq n\leq 2^N}\abs{U_n}>\eps 2^{N\alpha}}=
 \PP\ens{\max_{r\leq n\leq 2^N}\abs{\frac{CU_n}{\eps  }}>2^{N\pr{r-i/2}}
  C2^{N\pr{\alpha -\pr{r-i/2} }  }
 }.
 \end{equation}
We now choose $C$ such that $C^{\frac{2\gamma}{i\gamma+2}}B_{r,\gamma}\geq 1$, 
where $B_{r,\gamma}$ is like in Corollary~\ref{cor:inegalite_moments_exp}. We then apply for 
  Corollary~\ref{cor:inegalite_moments_exp} in the settting $\widetilde{h}
:= Ch/\eps$, $x =  C2^{N\pr{\alpha -\pr{r-i/2} }}  $ and for  $N$ 
such that $C2^{N\pr{\alpha -\pr{r-i/2} }}\geq x_{r,i,\gamma}$, with $\widetilde{N}=2^N$.

\subsection{Proof of Theorem~\ref{thm:large_deviation_Ustats}}

We apply Corollary~\ref{cor:extension_inegalite_de_deviation} with $x$ replaced by 
$xN^{i/2}$. This gives 
\begin{multline} \label{eq:step_large_deviation}
\PP\ens{\max_{r\leq n\leq N}\abs{U_n}>N^{r}x} 
\leq A_{r}\exp\pr{- \frac 12\pr{\frac{xN^{i/2}}y}^{ \frac{2}{i   }}}
\\+B_{r}\sum_{k=i}^r\int_{1 }^{+\infty}
\PP\ens{\abs{\E{h\pr{X_1,\dots,X_r}\mid X_1,\dots,X_k   }}> y 
N^{\pr{k-i}\frac{ 1}{2}}u ^{1/2} C_{r}   } 
\pr{1+\ln\pr{ u}}^{q_{k}}\mathrm du.
\end{multline}
Using \eqref{eq:inegalites_queues_generale}, we notice that 
$\sup_{t>0}\exp\pr{t^\gamma}\PP\ens{  \abs{\E{h\pr{X_1,\dots,X_r}\mid X_1,\dots,X_k   }}>t    }
\leq \kappa \cdot M$, where $\kappa$ depends only on $\gamma$. Therefore, we will only focus on
the term associated to $k=i$ in the right hand side of \eqref{eq:step_large_deviation}. After having 
used the condition on the tail, we obtain 
\begin{multline}
\PP\ens{\max_{r\leq n\leq N}\abs{U_n}>N^{r}x} 
\leq A_{r}\exp\pr{- \frac 12\pr{\frac{xN^{i/2}}y}^{ \frac{2}{i   }}}
\\+B_{r,\gamma}M \int_{1 }^{+\infty}
 \exp\pr{-\pr{ y 
 \frac{ 1}{2}u ^{1/2} C_{r}   }^{\gamma}}
\pr{1+\ln\pr{ u}}^{q_{k}}\mathrm du.
\end{multline}
Then using similar arguments as in the control of $I\pr{y}$ defined by 
 \eqref{eq:definition_de_I_y}, we get 
 \begin{equation}
\PP\ens{\max_{r\leq n\leq N}\abs{U_n}>N^{r}x} 
\leq A_{r}\exp\pr{- \frac 12\pr{\frac{xN^{i/2}}y}^{ \frac{2}{i   }}}
\\+B'_{r,\gamma}M 
 \exp\pr{-C_{r,\gamma}  y^{\gamma}}.
\end{equation}

We conclude the proof by choosing $y=N^{\frac{i}{2+i\gamma}}x^{\frac{2 }{2+i\gamma} }$. 

\subsection{Proof of the results of Subsection~\ref{subsec:WIP_Holder}}

\begin{proof}[Proof of Proposition~\ref{prop:critere_de_tension}]
First, by using Theorem~3 in \cite{MR2054586} (which is a consequence of 
the Schauder decomposition of $\Hca_\rho^o$ and the tightness 
criterion given in \cite{MR1736910}), the following condition is sufficient for tightness 
of a sequence of processes $\pr{\xi_n}_{n\geq 1}$ in $\Hca_\rho^o$:
\begin{equation}
\forall\eps>0, \lim_{J\to +\infty}\limsup_{n\to +\infty}
\PP\ens{\sup_{j\geq J}\max_{t\in D_j} \abs{\lambda_{j,t }\pr{\xi_n}}/\rho\pr{2^{-j}}>\eps  }=0,
\end{equation}
where for $j\geq $, $D_j=\ens{\pr{2k+1}2^{-j},0\leq k\leq 2^j-1}$ and 
\begin{equation}
\lambda_{j,t}\pr{x}=x\pr{t}-\frac 12\pr{x\pr{t+2^{-j}}-x\pr{t-2^{-j}}   }, x\in \Hca_\rho^o, t\in D_j.
\end{equation}
Since 
\begin{equation}
\abs{\lambda_{j,t }\pr{\xi_n}}\leq \frac 12\abs{x\pr{t} -   x\pr{t+2^{-j}}}   +
\frac 12\abs{x\pr{t} -   x\pr{t-2^{-j}}},
\end{equation}
we infer that 
\begin{equation}
\max_{t\in D_j} \abs{\lambda_{j,t }\pr{\xi_n}}\leq 
\max_{1\leq \ell \leq 2^j}\abs{\xi_n\pr{ \ell 2^{-j}      } -   \xi_n\pr{\pr{\ell-1}  2^{-j}}}
\end{equation}
and we are therefore reduced to prove that 
\begin{equation}
\forall\eps>0, \lim_{J\to +\infty}\limsup_{n\to +\infty}
\PP\ens{\sup_{j\geq J}  \max_{1\leq \ell \leq 2^j}
\abs{W_n\pr{ \ell 2^{-j}      } -  W_n\pr{\pr{\ell-1}  2^{-j}}}/\rho\pr{2^{-j}}>\eps  }=0.
\end{equation}
We will now control the differences $\abs{W_n\pr{ \ell 2^{-j}      } -  W_n\pr{\pr{\ell-1}  2^{-j}}}$ 
by exploiting the fact that the graph of $W_n$ is a polygonal line. 
Fix an integer $n$ and define the interval $I_k:=\left[\pr{k-1}/n,k/n\right]$, $1\leq k\leq n$. Define 
\begin{equation}
R_n:=\max_{1\leq k\leq n}\abs{W_n\pr{k/n}-W_n\pr{\pr{k-1}/n}}.
\end{equation}
Let $0\leq s<t\leq 1$. 
\begin{enumerate}
\item There exists a $k\in\ens{1,\dots,n}$ such that $s,t\in I_k$. Since on $I_k$, 
$W_n$ is affine, we derive that 
\begin{align}
\abs{W_n\pr{t}-W_n\pr{s}}&\leq n\pr{t-s}\abs{W_n\pr{k/n}-W_n\pr{\pr{k-1}/n}}\\ 
&\leq n\pr{t-s}R_n.
\end{align}
\item There exists a $k\in\ens{1,\dots,n-1}$ such that $s\in I_k$ and $t\in I_{k+1}$. Starting 
from 
\begin{equation}
\abs{W_n\pr{t}-W_n\pr{s}}\leq \abs{W_n\pr{t}-W_n\pr{k/n}}+\abs{W_n\pr{k/n}-W_n\pr{s}}
\end{equation}
and applying the reasoning of the first case to treat the two terms, we get 
\begin{equation}
\abs{W_n\pr{t}-W_n\pr{s}}\leq n\pr{t-s}R_n.
\end{equation}
\item There exists a $k\in\ens{1,\dots,n}$ such that $s\in I_k$ and $j\in\ens{k+2,\dots,n}$ such that 
$t\in I_j$. We start from 
\begin{equation}
\abs{W_n\pr{t}-W_n\pr{s}}\leq \abs{W_n\pr{t}-W_n\pr{\frac{j-1}n}}+
\abs{W_n\pr{\frac{j-1}n}-W_n\pr{\frac{k}{n}}}+\abs{W_n\pr{\frac{k}{n}}-W_n\pr{s}}.
\end{equation}
For the first and third terms of the right hand side, we use the reasoning of case 1 to get that 
their contribution does not exceed $2 R_n$. The second term is $\abs{S_{j-1}-S_k}/a_n$ 
hence 
\begin{equation}
\abs{W_n\pr{t}-W_n\pr{s}}\leq 3R_n+\frac{\abs{S_{\left[nt\right]       }-
S_{\left[ns\right]       }}}{a_n} .
\end{equation}
\end{enumerate}

Suppose that $j\geq [\log_2n]+1$ and let $t=\ell 2^{-j}$ and $s=\pr{\ell-1}2^{-j}$. Then 
$t-s=2^{-j}<1/n$ hence only the first two cases  are possible. Consequently, 
\begin{equation}
\sup_{j\geq [\log_2n]+1}\max_{1\leq \ell\leq 2^j}\abs{W_n\pr{ \ell 2^{-j}      } -  W_n\pr{\pr{\ell-1}  2^{-j}}}/\rho\pr{2^{-j}}\leq \sup_{j\geq [\log_2n]+1}n 2^{-j}R_n/\rho\pr{2^{-j}}.
\end{equation}
Now, exploiting the fact that $\rho\pr{u}=u^\alpha L\pr{1/u}$, we infer that for some constant $C$ 
depending only on $\alpha$ and $L$, 
\begin{equation}
\sup_{j\geq [\log_2n]+1}\max_{1\leq \ell\leq 2^j}\abs{W_n\pr{ \ell 2^{-j}      } -  W_n\pr{\pr{\ell-1}  2^{-j}}}/\rho\pr{2^{-j}}\leq C R_n/\rho\pr{1/n}.
\end{equation}

Let now $j\in\ens{J,\dots,[\log_2n]}$. This time, with the choices $t=\ell 2^{-j}$ and $s=\pr{\ell-1}2^{-j}$, 
the third case applies hence 
\begin{multline}
   \max_{J\leq j\leq [\log_2n]}   
   \max_{1\leq \ell\leq 2^j}\abs{W_n\pr{ \ell 2^{-j}      } -  W_n\pr{\pr{\ell-1}  2^{-j}}}/\rho\pr{2^{-j}}
\\   
   \leq 
   3 \max_{J\leq j\leq [\log_2n]}\rho\pr{2^{-j}}^{-1}     R_n+\max_{J\leq j\leq [\log_2n]} 
   \rho\pr{2^{-j}}^{-1}   
   \max_{1\leq \ell\leq 2^j}\frac 1{a_n}\abs{S_{\left[n\ell 2^{-j}\right] } -
   S_{\left[n\pr{\ell-1} 2^{-j}\right] }  }.
\end{multline}
In total, we got that for a constant $C$ 
depending only on $\rho$, 
\begin{multline}
   \sup_{j\geq J}   
   \max_{1\leq \ell\leq 2^j}\abs{W_n\pr{ \ell 2^{-j}      } -  W_n\pr{\pr{\ell-1}  2^{-j}}}/\rho\pr{2^{-j}}
\\   
   \leq 
  C\max_{J\leq j\leq [\log_2n]} \rho\pr{2^{-j}}^{-1} R_n+\max_{J\leq j\leq [\log_2n]}   \rho\pr{2^{-j}}^{-1} 
   \max_{1\leq \ell\leq 2^j}\frac 1{a_n}\abs{S_{\left[n\ell 2^{-j}\right] } -
   S_{\left[n\pr{\ell-1} 2^{-j}\right] }  }.
\end{multline}
Since \eqref{eq:tightness_criterion} guarantees that 
\begin{equation}
\lim_{J\to +\infty}\limsup_{n\to +\infty}\PP\ens{
\max_{J\leq j\leq [\log_2n]}    \rho\pr{2^{-j}}^{-1} 
   \max_{1\leq \ell\leq 2^j}\frac 1{a_n}\abs{S_{\left[n\ell 2^{-j}\right] } -
   S_{\left[n\pr{\ell-1} 2^{-j}\right] }  }>\eps
}=0,
\end{equation} 
it remains to check that the sequence $\pr{\max_{1\leq j\leq [\log_2n]} \rho\pr{2^{-j}}^{-1} R_n}_{n\geq 1}$ converges to $0$ in probability. 
Due to the construction of $W_n$ and the assumptions on the sequence $\pr{a_n}_{n\geq 1}$ and $\rho$, 
it suffices to check that the convergence in probability of $\pr{R_{2^N}/\rho\pr{2^{-N}}}_{N\geq 1}$. 
To this aim, notice that \eqref{eq:tightness_criterion} implies (by considering $n=2^N$) that 
for each positive $\eps$, 
\begin{equation}
\lim_{N\to +\infty} \sum_{k=0}^{2^N-1}
   \PP\ens{\abs{S_{ [2^N\pr{k+1}2^{-N}    ]   }-S_{ [2^Nk2^{-N}]   }  }    >a_{2^N} 
   \eps\rho\pr{2^{-N}}   }=0,
\end{equation}
which implies the convergence in probability to $0$ of $R_{2^N}/\rho\pr{2^{-N}}$. 
This ends the proof of Proposition~\ref{prop:critere_de_tension}.
\end{proof}

\begin{proof}[Proof of Proposition~\ref{prop:deviation_accroissements}]
We start from the equalities 
\begin{align}
\frac 1{\sqrt{n_2-n_1}n_2^{\frac{r-1}2}}\pr{U_{n_2}-U_{n_1}}&= 
\frac 1{\sqrt{n_2-n_1}n_2^{\frac{r-1}2}}\sum_{\substack{1\leq i_1<\dots<i_r\leq n_2 \\ n_1+1\leq i_r\leq n_2 }}
h\pr{X_{i_1},\dots,X_{i_r}}\\ 
&=\frac 1{\sqrt{n_2-n_1}}\sum_{j=n_1+1}^{n_2}D_j,
\end{align}
where 
\begin{equation}
D_j=\frac{1}{n_2^{\frac{r-1}2}}\sum_{1\leq i_1<\dots<i_{r-1}<j}h\pr{X_{i_1},\dots,X_{i_{r-1}},X_j}.
\end{equation}
Define the random variable $Y$ as 
\begin{equation}
\frac 1{n_2^{\pr{r-1}/2}} \max_{r-1\leq j\leq n_2}
\abs{\sum_{1\leq i_1<\dots<i_{r-1}<j} h\pr{X_{i_1},\dots,X_{i_{r-1}},X}       },
\end{equation}
where $X$ is independent of the sequence $\pr{X_i}_{i\geq 1}$ and has the same 
distribution as $X$. Then in the same way as in the proof of 
Theorem~\ref{thm:inegalite_deviation_U_stats}, we can prove that $\pr{D_j}_{j=n_1+1}^{n_2}$ 
is a martingale differences sequence and that $D_j^2\conv Y^2$ for all $j\in\ens{n_1+1,
\dots,n_2}$. Applying Proposition~\ref{prop:inegalite_deviation_martingales_dominee}, we derive that 
\begin{multline}
\PP\ens{\frac 1{\sqrt{n_2-n_1}n_2^{\frac{r-1}2}}\abs{U_{n_2}-U_{n_1}}>x}\\
\leq 2\exp\pr{-\frac 12 \frac{x^2}{y^2}}+2\int_1^{+\infty}
\PP\ens{ \frac 1{n_2^{\pr{r-1}/2}} \max_{r-1\leq j\leq n_2}
\abs{\sum_{1\leq i_1<\dots<i_{r-1}<j} h\pr{X_{i_1},\dots,X_{i_{r-1}},X}       }>y\sqrt{u}/2   }\mathrm{d}u.
\end{multline}
Then we treat the last integral in the following way: we integrate with respect to the law of $X$, 
apply Theorem~\ref{thm:inegalite_deviation_U_stats} to the $U$-statistics of order $r-1$ and 
rearrange the integrals. 
\end{proof}

\begin{proof}[Proof of Theorem~\ref{thm:PI_Holderien}]
The convergence of the finite dimensional distributions follows from Corollary~1 
in  \cite{MR740907} and the fact that for a fixed $t$, the contribution of 
$n^{-i/2}\pr{ nt-[nt]}\pr{U_{[nt]+1}-U_{[nt]}}$ is negligible.

It remain to prove tightness in $\Hca_\rho^o$. To this aim, we apply the 
Hoeffding's decomposition \eqref{eq:decomposition_de_Hoeffding_deg_ordre_i_1} and it suffices 
to show the following. 
\begin{Lemma}\label{lem:tension_termes_de_la_decomposition_de_Hoeffding}
Let  $r\geq 1$ be an integer,  $\pr{S,\Sca}$ be measurable space, $h\colon 
S^r\to \R$ be a symmetric measurable function (with $S^r$ induced with the product 
$\sigma$-algebra) and let $\pr{X_i}_{i\geq 1}$ be an i.i.d. sequence of 
$S$-valued random variables. Assume that $h$ is degenerated. Let $\rho\in\Rca_r$.  
Suppose that \eqref{eq:condition_suffisante_WIP} holds.
 Then 
\begin{equation}
\pr{\frac 1{n^{r/2}}\pr{U_{[nt]}-\pr{nt-[nt]} \pr{U_{[nt]+1} -U_{[nt]}  }  } }_{n\geq 1}
\end{equation}
is tight in $\Hca_\rho^o$.
\end{Lemma}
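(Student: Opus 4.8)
The plan is to reduce the tightness of the (nonlinear) partial-sum process built on $U_n$ to the tightness criterion of Proposition~\ref{prop:critere_de_tension}, via the deviation inequality of Proposition~\ref{prop:deviation_accroissements}. First I would observe that the process in Lemma~\ref{lem:tension_termes_de_la_decomposition_de_Hoeffding} is exactly the process $\sigma_n$ of \eqref{eq:definition_processus_sommes_partielles}, which is the polygonal interpolation of $n\mapsto n^{-r/2}U_n$. Although this is \emph{not} literally of the form \eqref{eq:def_processus_sommes_partielles_stationnaire} (the increments $U_{k}-U_{k-1}$ are not i.i.d.), the proof of Proposition~\ref{prop:critere_de_tension} never uses stationarity or independence of the summands: it only uses that $\sigma_n$ is the polygonal line through the points $(k/n, a_n^{-1}S_k)$ with $a_n=n^{r/2}$, together with the criterion \eqref{eq:tightness_criterion} expressed in terms of the tails of $S_{[n(k+1)2^{-j}]}-S_{[nk2^{-j}]}$. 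So the first step is to record that Proposition~\ref{prop:critere_de_tension} applies verbatim with $S_N:=U_N$ (setting $U_N=0$ for $N<r$), $a_n=n^{r/2}$ — note $\sup_n a_{2n}/a_n=2^{r/2}<\infty$ — and $\rho\in\Rca_r$, so that tightness in $\Hca_\rho^o$ follows once we verify
\begin{equation}\label{eq:to_verify}
\lim_{J\to\infty}\limsup_{n\to\infty}\sum_{j=J}^{[\log_2 n]}\sum_{k=0}^{2^j-1}
\PP\ens{\abs{U_{[n(k+1)2^{-j}]}-U_{[nk2^{-j}]}}>n^{r/2}\eps\rho(2^{-j})}=0
\end{equation}
for every $\eps>0$.

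Next I would estimate each summand in \eqref{eq:to_verify} using Proposition~\ref{prop:deviation_accroissements}. Write $n_1=[nk2^{-j}]$, $n_2=[n(k+1)2^{-j}]$, so $n_2-n_1\leq n2^{-j}+1$ and $n_2\leq n$. The event in \eqref{eq:to_verify} is
$\{(n_2-n_1)^{-1/2}n_2^{-(r-1)/2}\abs{U_{n_2}-U_{n_1}}>x\}$ with $x=n^{r/2}\eps\rho(2^{-j})/((n_2-n_1)^{1/2}n_2^{(r-1)/2})$; since $n_2\leq n$ and $n_2-n_1\leq Cn2^{-j}$, we get $x\geq c\,\eps\, 2^{j/2}\rho(2^{-j})$ for a constant $c>0$ depending only on $r$ (one should be a bit careful for the few indices with $n_2=n_1$, which contribute nothing). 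Applying Proposition~\ref{prop:deviation_accroissements} with this $x$ and a parameter $y$ to be chosen, each summand is bounded by
$A_r\exp(-\frac12(x/y)^{2/r}) + B_r\int_1^{\infty}\PP\{\abs{h(X_1,\dots,X_r)}>yu^{1/2}C_r\}(1+\ln u)^{q_r}\,du$,
and there are $\sum_{j\geq J}2^j \le$ (after summing the geometric-type bound) finitely many of them per level, so the two pieces must be summed over $j$ and $k$.

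Then comes the bookkeeping: I would choose $y=y_j:=c'\,\eps\,2^{j/2}\rho(2^{-j})j^{-r/2}$, so that $x/y\geq c''\, j^{r/2}$ and hence $A_r\exp(-\frac12(x/y)^{2/r})\leq A_r\exp(-c''' j)$; multiplying by the $2^j$ values of $k$ and by... wait, that diverges, so one instead keeps the exponent large: with $y_j$ as above, $(x/y)^{2/r}\gtrsim j$, and since we also need $x/y>3^r$, which holds for $j$ large; the key point is that summing $\sum_j 2^j \exp(-c j)$ is \emph{not} small, so in fact one must choose $y$ so that $(x/y)^{2/r}\gtrsim j\log 2\cdot(1+\delta)$ — i.e. replace $j^{-r/2}$ by a constant and instead exploit the slowly varying factor in $\rho\in\Rca_r$ — more precisely, because $\rho\in\Rca_r$ one has $2^{j/2}\rho(2^{-j})^{-1}(\,\ln 2^{j}\,)^{-r/2}\to 0$... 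I would in fact choose $y_j$ comparable to a slowly varying multiple of $\rho(2^{-j})2^{j/2}$ and note $\rho\in\Rca_r$ forces $2^{j/2}\rho(2^{-j})\gg j^{r/2}$, which makes the exponential sum over $j\ge J$ convergent and its tail $\to0$ as $J\to\infty$. For the second, ``heavy-tail'' term, after summing the $2^j$ copies over $k$ and the levels over $j$ one obtains (up to constants) exactly the series
$\sum_{j\geq 1}\int_1^\infty \PP\{\abs{h(X_1,\dots,X_r)}>c\,2^{j/2}\rho(2^{-j})j^{-r/2}u^{1/2}\}(1+\log u)^{q_r}\,du$,
which is finite by hypothesis \eqref{eq:condition_suffisante_WIP}; finiteness of the whole series forces its $J$-tail to $\to0$, giving \eqref{eq:to_verify}. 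Finally I would discharge the two special cases: for $\rho(t)=t^\alpha$, $0<\alpha<1/2$, $2^{j/2}\rho(2^{-j})=2^{j(1/2-\alpha)}$, a substitution turns the double series/integral into a constant times $\E{Y^{1/(1/2-\alpha)}(\log Y)^{r/2}}$ (using $q_r=r(r-1)/2$ and absorbing the logs), and for $\rho(t)=t^{1/2}(\log(C/t))^\beta$ with $\beta>r/2$ the factor $2^{j/2}\rho(2^{-j})j^{-r/2}$ is comparable to $j^{\beta-r/2}$, and Fubini plus the substitution $u\mapsto u$ reduce convergence to $\E{\exp(A Y^{1/(\beta-r/2)})}<\infty$ for all $A$, as claimed.

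The main obstacle I anticipate is the precise choice of the free parameter $y=y_j$ in the deviation inequality: it must simultaneously (i) satisfy the constraint $x/y>3^r$ for all $j\geq J$, (ii) make the exponential term summable in $j$ with a tail vanishing as $J\to\infty$ — which is exactly where the defining property of $\Rca_r$, $\lim_{\delta\to0}\rho(\delta)\delta^{-1/2}(\ln\delta^{-1})^{-r/2}=+\infty$, is used — and (iii) leave the heavy-tail term in a form that is literally the series \eqref{eq:condition_suffisante_WIP}. Getting all three to hold with one choice, while tracking that the constants $A_r,B_r,C_r,q_r$ do not depend on $n,j,k$, is the delicate part; the rest (the geometric sums over $k$, the passage $n=2^N$, and the two corollary computations) is routine.
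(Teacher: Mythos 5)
Your overall architecture coincides with the paper's: reduce to the dyadic-increment criterion of Proposition~\ref{prop:critere_de_tension} applied with $S_N=U_N$ and $a_n=n^{r/2}$ (the proposition indeed requires no stationarity), then bound each probability $\PP\ens{\abs{U_{[n(k+1)2^{-j}]}-U_{[nk2^{-j}]}}>n^{r/2}\eps\rho(2^{-j})}$ via Proposition~\ref{prop:deviation_accroissements} with the same $x\geq c\eps 2^{j/2}\rho(2^{-j})$, and sum. The genuine gap is at the decisive step, the choice of $y$. Your first choice $y_j=c'\eps 2^{j/2}\rho(2^{-j})j^{-r/2}$ is the correct one; you discarded it on the grounds that $\sum_j 2^j e^{-cj}$ need not be small, but that only reflects an untuned constant: since $(x/y_j)^{2/r}\geq (c/c')^{2/r}j$, taking $c'$ small enough — or, as the paper does, fixing $y$ by $(x/y)^{2/r}=3j\ln 2$ — makes the exponential term at most $2^{-3j/2}$, so after multiplying by the $2^j$ values of $k$ one gets $2^{-j/2}$, which is summable with vanishing tail as $J\to\infty$, while $y\geq c''\,2^{j/2}\rho(2^{-j})j^{-r/2}$ still leaves the second term in the form required by \eqref{eq:condition_suffisante_WIP} (and $x/y=(3j\ln 2)^{r/2}>3^r$ for $j\geq J$ large). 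Your fallback — ``replace $j^{-r/2}$ by a constant and exploit the slowly varying factor,'' i.e.\ take $y_j$ a slowly varying multiple of $2^{j/2}\rho(2^{-j})$ and invoke $2^{j/2}\rho(2^{-j})\gg j^{r/2}$ — does not work: with such a $y_j$ both $x$ and $y_j$ scale like $2^{j/2}\rho(2^{-j})$, so $x/y_j$ stays bounded and the exponential term does not decay in $j$ at all; the growth condition defining $\Rca_r$ never enters the exponent and is not what produces convergence of that sum.

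A smaller point: after summing over $k\in\ens{0,\dots,2^j-1}$ the heavy-tail term carries a factor $2^j$, so what is actually needed is $\sum_j 2^j\int_1^{+\infty}\PP\ens{\abs{h(X_1,\dots,X_r)}>c2^{j/2}\rho(2^{-j})j^{-r/2}u^{1/2}}(1+\log u)^{q_r}\,du<+\infty$, which is not ``literally'' the series displayed in \eqref{eq:condition_suffisante_WIP}; note that the paper's own verification of \eqref{eq:cond_suff_WIP_talpha} and \eqref{eq:cond_suff_WIP_t12} does keep this factor (it uses $\sum_{j\geq1}2^j\PP\ens{Z>2^{ja}/(1+j)^b}\leq C_{a,b}\E{Z^{1/a}(1+Z)^b\mathbf 1\ens{Z>1}}$), so you should either carry the $2^j$ explicitly or state that the hypothesis is to be read with it, rather than asserting the two series coincide. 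Finally, the discussion of the special cases $\rho(t)=t^\alpha$ and $\rho(t)=t^{1/2}(\log(C/t))^\beta$ belongs to the proof of Theorem~\ref{thm:PI_Holderien}, not to this lemma, whose hypothesis is \eqref{eq:condition_suffisante_WIP} itself.
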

Let us first show how Lemma~\ref{lem:tension_termes_de_la_decomposition_de_Hoeffding} helps 
to conclude. After having applied the Hoeffding decomposition, the considered partial sum is 
a sum of partial sum process defined like in \eqref{eq:definition_processus_sommes_partielles} but 
with $U$-statistics of lower order and overall degenerated, with a stronger normalization than 
$n^{k/2}$ for each term of the sum in \eqref{eq:decomposition_de_Hoeffding_deg_ordre_i_1}. 
This shows tightness of the initially considered process and concludes the proof.

\begin{proof}[Proof of Lemma~\ref{lem:tension_termes_de_la_decomposition_de_Hoeffding}]
By an application of Proposition~\ref{prop:critere_de_tension}, it suffices to show that for 
all positive $\varepsilon$, 
\begin{equation}
\lim_{J\to +\infty}\limsup_{n\to +\infty}\sum_{j=J}^{[\log_2n]}\sum_{k=0}^{2^j-1}
   \PP\ens{\abs{U_{ [n\pr{k+1}2^{-j}    ]   }-U_{ [nk2^{-j}]   }  }   >n^{r/2} \eps\rho\pr{2^{-j}}   }=0.
\end{equation}
In order to control the involved probabilities, we apply Proposition~\ref{prop:deviation_accroissements} 
for fixed $n$, $J$, $j\in \ens{J,\dots,[\log_2 n]}$ and $k\in\ens{0,\dots,2^j-1}$ 
in the following setting: $n_1=[nk2^{-j}] $, $n_2=[n\pr{k+1}2^{-j}    ] $ and 
\begin{equation}
x:= \frac{n^{r/2} \eps\rho\pr{2^{-j}}   }{  \sqrt{[n\pr{k+1}2^{-j}    ] -[nk2^{-j}]}  [n\pr{k+1}2^{-j}    ]^{\frac{r-1}{2}  }   }.
\end{equation}
Then we choose $y$ such that 
\begin{equation}
\pr{\frac xy}^{2/r}=3 j  \ln 2.
\end{equation}
Observing that $x\geq c 2^{j/2}\rho\pr{2^{-j}} $, we derive that 
\begin{multline}
 \PP\ens{\abs{U_{ [n\pr{k+1}2^{-j}    ]   }-U_{ [nk2^{-j}]   }  }   >n^{r/2} \eps\rho\pr{2^{-j}}   }\\
 \leq A_r 2^{-3j/2}+
 B_r\int_1^{+\infty}\PP\ens{\abs{h\pr{X_1,\dots,X_r}}> c2^{j/2}\rho\pr{2^{-j}}j^{-r/2} u^{1/2}}
 \pr{1+\log u}^{q_r}du.
\end{multline}
We conclude by summing over $j\geq J$  and exploiting the convergence of the involved series.

\end{proof}

Now, it remains to show that conditions \eqref{eq:cond_suff_WIP_talpha} (respectively
\eqref{eq:cond_suff_WIP_t12}) are sufficient in the case $\rho\pr{t}=t^\alpha$ (respectively 
$\rho\pr{t}=t^{1/2}\pr{\log\pr{c/t}}^\beta$). 

When $\rho\pr{t}=t^\alpha$, we first use the fact that if $a$ and $b$ are two positive real 
numbers and $Z$ is a non-negative random variable, then 
\begin{equation}
\sum_{j\geq 1}2^j\PP\ens{ Z >\frac{2^{ja}}{\pr{1+j}^b}  }\leq 
C_{a,b}\E{Z^{1/a}\pr{1+Z}^b\mathbf 1\ens{Z>1}}.
\end{equation}
This can be seen by cutting the tail probability on a sum of probabibilities that 
$Z$ lies in $\left(c_j,c_{j+1}\right]$, where $c_j=\frac{2^{ja}}{\pr{1+j}^b} $ then 
switching the sums. Applying this to $Z=Y/v^{1/2}$ for a fixed $v$, $a=1/2-\alpha$ and 
$b=r/2$, bounding the logarithms by $\pr{1+\log Y}^{r/2 }= $ 
and accounting $1/\pr{1/2-\alpha}$ gives \eqref{eq:condition_suffisante_WIP}.

When $\rho\pr{t}=t^{1/2}\pr{\log\pr{c/t}}^\beta$, this follows from similar estimates as in 
the proof of Corollary~\ref{cor:inegalite_moments_exp}.
This concludes the proof of Theorem~\ref{thm:PI_Holderien}.
\end{proof}

\def\polhk\#1{\setbox0=\hbox{\#1}{{\o}oalign{\hidewidth
  \lower1.5ex\hbox{`}\hidewidth\crcr\unhbox0}}}\def\cprime{$'$}
  \def\polhk#1{\setbox0=\hbox{#1}{\ooalign{\hidewidth
  \lower1.5ex\hbox{`}\hidewidth\crcr\unhbox0}}} \def\cprime{$'$}
\providecommand{\bysame}{\leavevmode\hbox to3em{\hrulefill}\thinspace}
\providecommand{\MR}{\relax\ifhmode\unskip\space\fi MR }
\providecommand{\MRhref}[2]{%
  \href{http://www.ams.org/mathscinet-getitem?mr=#1}{#2}
}
\providecommand{\href}[2]{#2}

 \end{document}